\numberwithin{equation}{section}
\theoremstyle{plain}
\newtheorem{lemma}{Lemma}[section]
\newtheorem{proposition}[lemma]{Proposition}
\newtheorem{theorem}[lemma]{Theorem}
\newtheorem{corollary}[lemma]{Corollary}
\theoremstyle{definition}
\newtheorem{definition}[lemma]{Definition}
\newtheorem{remark}[lemma]{Remark}
\newtheorem{example}[lemma]{Example}
\begin{document}
\newcommand{\R}{{\mathbb R}}
\newcommand{\C}{{\mathbb C}}
\newcommand{\F}{{\mathbb F}}
\renewcommand{\O}{{\mathbb O}}
\newcommand{\Z}{{\mathbb Z}} 
\newcommand{\N}{{\mathbb N}}
\newcommand{\Q}{{\mathbb Q}}
\renewcommand{\H}{{\mathbb H}}

\newcommand{\Aa}{{\mathcal A}}
\newcommand{\Bb}{{\mathcal B}}
\newcommand{\Cc}{{\mathcal C}}    
\newcommand{\Dd}{{\mathcal D}}
\newcommand{\Ee}{{\mathcal E}}
\newcommand{\Ff}{{\mathcal F}}
\newcommand{\Gg}{{\mathcal G}}    
\newcommand{\Hh}{{\mathcal H}}
\newcommand{\Kk}{{\mathcal K}}
\newcommand{\Ii}{{\mathcal I}}
\newcommand{\Jj}{{\mathcal J}}
\newcommand{\Ll}{{\mathcal L}}    
\newcommand{\Mm}{{\mathcal M}}    
\newcommand{\Nn}{{\mathcal N}}
\newcommand{\Oo}{{\mathcal O}}
\newcommand{\Pp}{{\mathcal P}}
\newcommand{\Qq}{{\mathcal Q}}
\newcommand{\Rr}{{\mathcal R}}
\newcommand{\Ss}{{\mathcal S}}
\newcommand{\Tt}{{\mathcal T}}
\newcommand{\Uu}{{\mathcal U}}
\newcommand{\Vv}{{\mathcal V}}
\newcommand{\Ww}{{\mathcal W}}
\newcommand{\Xx}{{\mathcal X}}
\newcommand{\Yy}{{\mathcal Y}}
\newcommand{\Zz}{{\mathcal Z}}

\newcommand{\zt}{{\tilde z}}
\newcommand{\xt}{{\tilde x}}
\newcommand{\Ht}{\widetilde{H}}
\newcommand{\ut}{{\tilde u}}
\newcommand{\Mt}{{\widetilde M}}
\newcommand{\Llt}{{\widetilde{\mathcal L}}}
\newcommand{\yt}{{\tilde y}}
\newcommand{\vt}{{\tilde v}}
\newcommand{\Ppt}{{\widetilde{\mathcal P}}}
\newcommand{\bp }{{\bar \partial}} 
\newcommand{\ad}{{\rm ad}}
\newcommand{\Om}{{\Omega}}
\newcommand{\om}{{\omega}}
\newcommand{\eps}{{\varepsilon}}
\newcommand{\Di}{{\rm Diff}}

\renewcommand{\a}{{\mathfrak a}}
\renewcommand{\b}{{\mathfrak b}}
\newcommand{\e}{{\mathfrak e}}
\renewcommand{\k}{{\mathfrak k}}
\newcommand{\pg}{{\mathfrak p}}
\newcommand{\g}{{\mathfrak g}}
\newcommand{\gl}{{\mathfrak gl}}
\newcommand{\h}{{\mathfrak h}}
\renewcommand{\l}{{\mathfrak l}}
\newcommand{\sm}{{\mathfrak m}}
\newcommand{\n}{{\mathfrak n}}
\newcommand{\s}{{\mathfrak s}}
\renewcommand{\o}{{\mathfrak o}}
\newcommand{\so}{{\mathfrak so}}
\renewcommand{\u}{{\mathfrak u}}
\newcommand{\su}{{\mathfrak su}}

\newcommand{\ssl}{{\mathfrak sl}}
\newcommand{\ssp}{{\mathfrak sp}}
\renewcommand{\t}{{\mathfrak t }}
\newcommand{\Cinf}{C^{\infty}}
\newcommand{\la}{\langle}
\newcommand{\ra}{\rangle}
\newcommand{\half}{\scriptstyle\frac{1}{2}}
\newcommand{\p}{{\partial}}
\newcommand{\notsub}{\not\subset}
\newcommand{\iI}{{I}}               
\newcommand{\bI}{{\partial I}}      
\newcommand{\LRA}{\Longrightarrow}
\newcommand{\LLA}{\Longleftarrow}
\newcommand{\lra}{\longrightarrow}
\newcommand{\LLR}{\Longleftrightarrow}
\newcommand{\lla}{\longleftarrow}
\newcommand{\INTO}{\hookrightarrow}

\newcommand{\QED}{\hfill$\Box$\medskip}
\newcommand{\UuU}{\Upsilon _{\delta}(H_0) \times \Uu _{\delta} (J_0)}
\newcommand{\bm}{\boldmath}
\title[Orbits  in  real $\Z_m$-graded  semisimple     Lie algebras]{\large
Orbits in  real $\Z_m$-graded semisimple Lie algebras}
\author{H\^ong V\^an L\^e}
\date{\today}

\medskip

\abstract  
In this note we  propose a method to classify    homogeneous nilpotent  elements
in a real $\Z_m$-graded semisimple Lie algebra $\g$. Using this we describe  the set of orbits 
of homogeneous elements in a  real $\Z_2$-graded semisimple Lie algebra. A classification of     4-vectors (resp.  4-forms) on $\R^8$ can be given  using this method.
\endabstract

\maketitle
\tableofcontents


{\it MSC: 17B70,  15A72, 13A50}\\
{\it  Keywords : real $\Z_m$-graded Lie algebra, nilpotent elements,  homogeneous elements}.

\medskip

\tableofcontents

\section {Introduction}

Let $\g = \oplus_{i\in\Z_m} \g_i$ be a real $\Z_m$-graded   semisimple Lie algebra. If $m \ge 3$ we cannot associate
to this $\Z_m$-gradation  a  compatible finite order automorphism  of $\g$ as in the case of complex  $\Z_m$-graded Lie algebras,
unless $m$ is even and the only nonzero components  of $\g$ have  degree $0$ or $m/2$. To get around this  problem we  extend the $\Z_m$-gradation  on $\g$ linearly to a  $\Z_m$-gradation on  the complexification $\g ^\C$.
Denote by  $\theta^\C $  the  automorphism of $\g ^\C$ associated  with this $\Z_m$-gradation, i.e. $\theta^\C_{ |\g _k^\C} = \exp {{2\pi \sqrt{-1}k\over m}}\cdot Id $. 

Let $G^\C$ be the connected simply-connected Lie group whose Lie algebra is $\g^\C$. Clearly, $\theta^\C$  can be lifted to an  
automorphism $\Theta^\C$ of $G^\C$. Denote by $G _0^\C$   the connected  Lie subgroup in $G^\C$ whose Lie algebra
is $\g_0^\C$.  A result by Steinberg \cite[Theorem 8.1]{Steinberg1968}  implies that $G_0^\C$ is the  Lie subgroup  consisting of fixed  points of $\Theta^\C$.  Note that the  adjoint action of group $G_0^\C$   on $\g^\C$ preserves the induced  $\Z_m$-gradation on $\g ^ \C$.  Let $G$ be the  connected Lie subgroup in $G ^\C$ whose Lie algebra is $\g$. Denote
by $G_0$ the connected  Lie subgroup in $G$ whose Lie algebra is $\g_0$. The adjoint action of $G_0$ on
$\g$ preserves the $\Z_m$-gradation. We note that the adjoint action of $G_0$ on $\g$   coincides with  the adjoint action
of any connected Lie subgroup $\tilde G_0$  of a connected Lie group  $\tilde G$ having  
Lie algebras $\g_0$   and $\g$ correspondingly.  In \cite{Vinberg1976} Vinberg observed that      
by considering a new $\Z_{\bar m}$-graded Lie algebra $\bar \g$, $\bar m = {m\over (m,k)}$ and
$\bar \g_p = \g_{pk}$ for  $p \in \Z_{\bar m}$ we can regard the  adjoint action of $G_0$ on $\g_k$ as
the action of $G_0$ on $\bar \g_1$.   Thus in this note we will consider only the adjoint action of
$G_0$ on $\g_1$. We also write ``the adjoint action/orbit(s)", or simply ``orbits", if no misunderstanding  can occur.

The problem of  classification of  the adjoint orbits in real or complex  graded semisimple Lie algebras  $\g = \oplus_{i\in \Z_m} \g_i$  is related to many important algebraic and geometric questions. In  \cite{Vinberg1975}
Vinberg proposed a   method  to classify  the adjoint orbits  in complex  $\Z_m$-graded semisimple Lie algebras.  His  work  developed further   the theory of  $\Z_2$-graded  complex semisimple  Lie algebras   by Kostant and Rallis \cite{KR1971},  and  the theory of finite order automorphisms  on complex simple Lie algebras by Kac \cite{Kac1969}.
It is known that all Cartan subspaces in $\g_1^\C$ are conjugate \cite{Vinberg1976}. 
Thus the classification
of  semisimple elements in $\g_1 ^\C$ is reduced to the classification of the orbits of the associated Weyl group on a Cartan subspace  in $\g_1 ^ \C$ \cite{Vinberg1976}.  To classify  nilpotent elements  in  $\g_1 ^\C$, Vinberg  proposed a  method  of support, which associates to each nilpotent element $e$ in $\g_1$ a  $\Z$-graded  semisimple Lie  algebra
defined by  a characteristic $h(e)$ of $e$, see section 4 for more details. In a complex $\Z_m$-graded  semisimple Lie algebra a nilpotent element $e$ in $\g_1$ is defined uniquely up to conjugacy with respect to the centralizer of    $h(e)$ \cite{Vinberg1975}. 
If $m =1$,  we can also classify  nilpotent orbits   in a simple Lie algebra $\g$  over an algebraic closed field of characteristic 0, or of prime characteristic $p$, provided  $p$ is sufficient large. We refer the reader to the book by Collingwood and McGovern \cite{C-M1993} and the book by Humphreys \cite{Humphreys1995}  for  surveys.

 In a real $\Z_m$-graded  semisimple Lie algebras $\g$  the  conjugacy classes of  Cartan subspaces  may  consist of more than one element. Furthermore, a given characteristic  element   in  a real  $\Z_m$-graded Lie algebra can be  associated with many
 conjugacy classes  of nilpotent elements in $\g_1$. These phenomena   are main difficulties when we    want to classify
 the adjoint orbits in a real $\Z_m$-graded semisimple Lie algebra $\g$.
If $m =1$, i.e. $\g$ is without gradation,  a classification
of the adjoint orbits  of nilpotent elements in $\g$ can be  obtained, using the Cayley transform
\cite{Djokovic1987}, \cite{Sekiguchi1987} and  a  classification of nilpotent
elements in   the associated $\Z_2$-graded complex  semisimple Lie algebra, see e.g. \cite{C-M1993}, \cite{Djokovic1988}.
Furthermore,  a classification of the adjoint orbits of semisimple elements in  $\g$ can be   obtained  from the classification  of  Cartan  subalgebras in $\g$ by
Kostant \cite{Kostant1955} and Sugiura \cite{Sugiura1959}. We also like to mention here the  work by Rothschild
on the   adjoint orbit space  in a real reductive algebra \cite{Rothschild1972}, as well as  the work by Djokovic
on the  adjoint orbits  of  nilpotent elements in  $\Z$-graded Lie algebra $\e_{8(8)}$ \cite{Djokovic1983}. An essential part of our method
of classification of nilpotent orbits in real $\Z_m$-graded  semisimple Lie algebras   is a combination of certain ideas in  their works. 

In this note we propose a method to classify the adjoint orbits of homogeneous  nilpotent elements
in a real $\Z_m$-graded semisimple Lie algebra $\g$.  Roughly speaking, our method of classification of homogeneous nilpotent elements in $\g$ consists of two steps. In the first step we classify  the conjugacy classes of  characteristics in a given
real $\Z_m$-graded  semisimple Lie algebra. In the second step we  classify the conjugacy classes of nilpotent elements  associated with a
given conjugacy class of a characteristic.  The first step uses the Vinberg classification of characteristics in the complexification $\g_1 ^ \C$ \cite{Vinberg1979} combining with the Djokovic  classification of real  forms of  a given complex $\Z$-graded semisimple Lie algebra \cite{Djokovic1982}, taking into account
our observation that there is an  injective map from the set of  $Ad_{G_0}$-conjugacy classes of
 characteristics  in $\g_0$ to the set of    $Ad_{G_0 ^ \C}$-conjugacy classes  of characteristics in $\g_0 ^ \C$, see Lemma \ref{Lemma.5.1} and  Remark \ref{Remark.5.2}.  To perform the second step we analyze the set of singular elements in a real $\Z$-graded  semisimple Lie algebra defined by a given characteristic, see section 4 for more details.  It turns out that we can apply algorithms in real algebraic geometry to distinguish  the  conjugacy classes of nilpotent elements
 associated with a given  characteristic.   Our recipe to  classify  nilpotent elements is
 summarized in Remark \ref{alg2}. We note that the related algorithm  in  real algebraic geometry is highly complicated. To  apply  our algorithm for interesting cases  we will need a powerful computer system   together with a  suitable software, see Remark \ref{rsa1}.
 
For $m = 2$  a classification of Cartan subspaces in $\g_1$ has been obtained by Oshima and
Matsuki \cite{OM1980}.  Using their
classification and our results in previous section, we describe the set of orbits of    homogeneous elements  of degree 1 in  a $\Z_2$-graded  semisimple Lie algebra, following
 the  same scheme proposed by Elashvili and Vinberg in \cite{EV1978}, see Remark \ref{sumix}.

The plan of our note is as follows. In section  2 we recall main notions and  prove   a version of the Jacobson-Morozov-Vinberg theorem for real $\Z_m$-graded semisimple Lie algebras, see Theorem \ref{Theorem.2.1}.
In section 3 we prove the existence of a R-compatible Cartan involution  on $\g = \oplus _{i \in \Z_m} \g_i$,
which   provides us   an isomorphism between the $Ad_{G_0}$-orbit spaces on $\g_i$ and $\g_{-i}$, see Corollary \ref{rev}. We also
give many important  examples of   real $\Z_m$-graded semisimple Lie algebras in this section. In section 4 we  propose a method
to classify homogeneous nilpotent elements in  a real  $\Z_m$-graded semisimple Lie algebra. We demonstrate our method in Example \ref{lin}. In section 5 we  describe the set of 
homogeneous elements  in a   real $\Z_2$-graded semisimple Lie algebra. In this section we also explain the relation between    a classification of homogeneous elements in real $\Z_m$-graded  semisimple Lie algebras and
 a classification of   k-vectors (resp. k-forms) on $\R ^8$.

\medskip

\section {Semisimple elements and  nilpotent elements of a  real $\Z_m$-graded  semisimple 
Lie algebra}

\medskip

Let $\g =  \oplus_{i\in \Z_m} \g_i$ be  a real $\Z_m$-graded  semisimple  Lie algebra. 
An element $x\in \g _i$, $i = \overline{0,m-1}$,  is  called {\it semisimple} (resp. {\it nilpotent}), if $x$ is  semisimple  (resp. nilpotent)  in $\g$. 
In this section we explain the   Jordan decomposition   for an element $x \in \g_i$. We  also prove an analog of the Jacobson-Morozov-Vinberg theorem on the existence of  an $\ssl_2$-triple associated to a homogeneous nilpotent element in $\g_1$, see Theorem \ref{Theorem.2.1}, and we introduce the notion of a Cartan subspace in $\g_1$. 
 
 \medskip
 
{\bf  Jordan  decomposition in  a real $\Z_m$-graded semisimple   Lie algebra.} {\it Any  $x\in \g_i$  has
a unique decomposition $x= x_s + x_n$, where  $x_s$ is semisimple, $x_n$ is nilpotent, $x_s, x_n \in \g _i$,    $[x_s, x_n] = 0$.}

\medskip


   For a real form $\g$ of $ \g ^\C$ let us denote by $\tau _\g$  the complex conjugation of $\g ^\C$ with respect to $\g$. It is  easy to see that the existence
and the uniqueness of the  Jordan  decomposition   for  $x\in \g_i$ follows from the existence and the uniqueness of
the Jordan decomposition  for $x $ in $ \g_i ^\C$ \cite{Vinberg1976}, since this decomposition  is invariant under the complex conjugation  $\tau _\g$,  which  preserves the  $\Z_m$-gradation on $\g ^\C$.

The case  $m=1$  has been treated before, see  e.g. \cite[chapter IX, exercise A.6]{Helgason1978},  and  the references
therein. 

\medskip 

The  following Theorem \ref{Theorem.2.1} is an analogue of the  Jacobson-Morozov-Vinberg theorem in \cite[Theorem 1(1)]{Vinberg1979}.  Some partial cases of  Theorem 
\ref{Theorem.2.1} has been proved in \cite[Lemma 6.1]{Djokovic1983},  and in \cite[Theorem 9.2.3]{C-M1993}.

For any element $e\in \g$ let us denote 
 by $\Zz _{ G_0}(e)$ the centralizer of $e$ in $G_0$.

\begin{theorem}[Jacobson-Morozov-Vinberg (JMV)  theorem  for a real $\Z_m$-graded semisimple Lie algebra]\label{Theorem.2.1}   Let $e \in \g _1$ be  a  nonzero  nilpotent element.\\ 
i) There is a semisimple element $h \in \g _0$ and a nilpotent element  $f \in \g _{-1}$   such that
$$ [h, e] = 2e, \, [h, f] = -2f, \, [e,f] = h.$$
ii) Element $h$ is defined  uniquely up to  conjugacy via an element in
$\Zz _{ G _0} (e)$.\\
iii)  Given $e$ and $h$, element $f$ is defined uniquely.
\end{theorem}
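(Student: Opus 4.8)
The plan is to argue directly over $\R$, following the classical Jacobson--Morozov proof but reading every step through the $\Z_m$-gradation. (One could instead invoke the complex statement \cite[Theorem 1(1)]{Vinberg1979} for $\g^\C$ and push it down through $\tau_\g$, but the direct route avoids a Galois-descent issue that part ii) would otherwise raise, since complex conjugacy of two real triples need not be real conjugacy.) The engine is the compatibility of the Killing form $B$ of $\g$ with the gradation: for $X\in\g_i$ and $Y\in\g_j$ the operator $\ad X\,\ad Y$ shifts degree by $i+j$, so its trace vanishes unless $i+j\equiv 0\pmod m$; hence $B(\g_i,\g_j)=0$ for $i+j\not\equiv 0$, and $B$ pairs $\g_1$ with $\g_{-1}$ nondegenerately while restricting nondegenerately to $\g_0$.

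For i) I would first construct the neutral element inside $\g_0$. Invariance of $B$ identifies the $B$-annihilator of $[\g_0,e]\subseteq\g_1$ inside $\g_{-1}$ with $\Zz_{\g_{-1}}(e)=\ker(\ad e)\cap\g_{-1}$: indeed $B(y,[x,e])=-B([y,e],x)$ for $x\in\g_0$, and since $B$ is nondegenerate on $\g_0$ this vanishes for all $x$ exactly when $[y,e]=0$. For such a $y$ the commuting operators $\ad e$, $\ad y$ have nilpotent product (as $\ad e$ is nilpotent), so $B(e,y)={\rm tr}(\ad e\,\ad y)=0$; thus $e$ is $B$-orthogonal to the annihilator of $[\g_0,e]$, whence $e\in[\g_0,e]$. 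Writing $e=[x,e]$ with $x\in\g_0$ and setting $H=2x$ gives $H\in\g_0$ with $[H,e]=2e$. I would then complete $H$ to a triple by the Morozov argument: the relation $[\ad H,\ad e]=2\,\ad e$ shows $\ad e$ raises the eigenvalue of $\ad H$ by $2$, and solving $[e,f]=H$ in the $(-2)$-eigenspace of (the semisimple part of) $\ad H$ is possible over any field of characteristic zero. Because $\ad H$ and $\ad e$ are homogeneous of degrees $0$ and $1$, this whole computation respects the $\Z_m$-gradation and delivers $f\in\g_{-1}$. Finally $(H,e,f)$ is a genuine $\ssl_2$-triple, so $\ad H$ is diagonalizable with integer eigenvalues and $H$ is semisimple in $\g$, completing i).

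Part iii) is then immediate: if $(h,e,f)$ and $(h,e,f')$ are two triples, their difference satisfies $f-f'\in\Zz_{\g_{-1}}(e)$ and $[h,f-f']=-2(f-f')$; but $\Zz_\g(e)=\ker(\ad e)$ is spanned by the highest-weight vectors of the $\ssl_2$-action $\langle h,e,f\rangle$ on $\g$, all of $\ad h$-weight $\ge 0$, so the weight $-2$ does not occur and $f=f'$. For ii) I would use the graded form of Kostant's theorem that the triples through a fixed $e$ form a single orbit. Given $(h,e,f)$ and $(h_1,e,f_1)$ one has $h_1-h\in\Zz_{\g_0}(e)$, and $h$ is carried to $h_1$ by $\exp(\ad X)$ with $X$ in the nilpotent algebra $\Zz_\g(e)\cap\bigoplus_{j\ge 1}\g(j)$, where $\g(j)$ are the $\ad h$-eigenspaces; since this construction solves, degree by degree in the $\ad h$-filtration, a system whose right-hand side $h_1-h$ lies in $\g_0$ and whose brackets preserve the $\Z_m$-gradation, the element $X$ can be taken in $\g_0$, so $\exp(\ad X)\in\Zz_{G_0}(e)$. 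Part iii) then forces $f_1=\mathrm{Ad}(\exp\ad X)f$ as well.

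The hard part, and the only place where the gradation is not purely bookkeeping, is twofold. In i), the surjectivity of $\ad e$ from the $(-2)$- to the $0$-eigenspace of $\ad H$ must be established before $H$ is known to be semisimple; I would handle this through the semisimple part of $\ad H$ and the standard representation-theoretic estimates, taking care to avoid circularity. In ii), the genuine point is the claim that the Kostant conjugator $X$ may be chosen homogeneous of degree $0$; making this precise requires running the iterative construction of $X$ entirely within the $\g_0$-components of the graded pieces of $\Zz_\g(e)$ and checking that no higher-degree components are forced in. Reality, by contrast, is free throughout: every space and every equation above is defined over $\R$, and the nilpotent, characteristic-zero nature of the constructions means they never leave the real field.
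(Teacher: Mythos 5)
Your parts ii) and iii) follow essentially the same route as the paper (Kostant's conjugation argument run inside $\g_0$, and the $\ssl_2$-weight argument for the uniqueness of $f$), and those sketches are sound. For part i), however, you take a genuinely different route --- a direct real graded Jacobson--Morozov argument instead of the paper's descent from Vinberg's complex theorem --- and there is a genuine gap at the decisive step. Your double-annihilator argument correctly produces $x\in\g_0$ with $[x,e]=e$, hence $H=2x$ with $[H,e]=2e$. But Morozov's completion lemma does not apply to an arbitrary such $H$: it requires $H\in\mathrm{im}(\ad e)$, i.e. $H\in[e,\g_{-1}]$ in the graded setting. The solution set of $[H,e]=2e$ is a coset of $\Zz_{\g_0}(e)$, and only the sub-coset meeting $[e,\g_{-1}]$ consists of completable elements; if $\Zz_{\g_0}(e)$ contains semisimple elements the completion genuinely fails for bad choices (e.g. $\g=\ssl_2\times\ssl_2$ with $e$ supported in one factor and $H$ acquiring a semisimple component in the other: no $f$ with $[e,f]=H$ exists). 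Closing this gap is exactly the hard half of the classical proof --- one must show $e\in(\ad e)^2(\g_{-1})$, which rests on the lemma that elements of $\ker(\ad e)\cap\mathrm{im}(\ad e)$ are nilpotent --- and your sketch omits it entirely; the difficulty you do flag (surjectivity onto the $(-2)$-eigenspace before $H$ is known semisimple) is downstream of this and equally unresolved by the phrase ``possible over any field of characteristic zero.''

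For comparison, the paper sidesteps all of this by starting from Vinberg's complex JMV theorem: it takes the real parts $h_\R$, $f_\R$ of a complex triple through $e$ (which automatically satisfy $[h_\R,e]=2e$, $[e,f_\R]=h_\R$ with $h_\R\in[e,\g^\C_{-1}]\cap\g_0$), and then corrects $f_\R$ by solving $(\ad_{h_\R}+2)z=-[h_\R,f_\R]-2f_\R$ in $\Zz_\g(e)\cap\g_{-1}$; the solvability is free because the nonnegativity of the $\ad_{h_\R}$-eigenvalues on $\Zz_{\g^\C}(e)$ is already supplied by the complex $\ssl_2$-theory. Your direct approach is salvageable --- Jacobson--Morozov does hold over any field of characteristic zero, and the grading is indeed only bookkeeping once the ungraded skeleton is in place --- but as written the construction of a \emph{completable} neutral element is missing, and that is the one step that cannot be waved through. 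Your parenthetical worry about Galois descent in part ii) is reasonable but moot: the paper, like you, proves ii) directly over $\R$ via the nilpotent ideal $\Zz_{\g_0}(e)\cap[\g_{-1},e]$, using complexification only for part i).
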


\medskip

\begin{remark}\label{Remark.2.2} -The JMV Theorem plays a key role  in the study of nilpotent elements. This Theorem associates to each nilpotent element $e$  a semisimple element $h \in \g_0$, which is defined by $e$ uniquely up  to  conjugation. The element $h$ in Theorem \ref{Theorem.2.1}  is called {\it characteristic} (or  a characteristic) of $e$. We also denote  a characteristic  of $e$  by $h(e)$. We call an element $h\in \g_0$   characteristic, if it is a characteristic of some nilpotent element $e \in \g_1$.\\
- Each assertion in Theorem \ref{Theorem.2.1} has its counterpart in the complex case \cite[Theorem 1]{Vinberg1979}. The converse  is not true. We do not have an analogue of Theorem 1(4) in \cite{Vinberg1979}, since
$e$  is not defined uniquely by $h$ up to $\Zz_{G_0} (e)$. This makes the classification  of  nilpotent elements
in Lie algebras over $\R$ more complicated than those over $\C$.
\end{remark}

We call a triple $(h,e,f)$ satisfying  the condition in Theorem \ref{Theorem.2.1}.i {\it an $\ssl_2$-triple}.  

\begin{proof} [Proof of Theorem \ref{Theorem.2.1}] i) Theorem \ref{Theorem.2.1}.i is obtained by combining the JMV theorem in \cite{Vinberg1979} for graded complex Lie algebras with a Jacobson's trick used in the  proof of \cite[Lemma 9.2.2]{C-M1993}.  By the JMV theorem   \cite[Theorem 1(1)]{Vinberg1979} there exists  a triple $(h _\R  + \sqrt{-1} h_\R ' \in \g _0 ^\C, e, f_\R + \sqrt{-1} f_\R' \in  \g _{-1}^\C)$  such that $h _\R, h_\R ', f_\R, f_\R '\in \g$ and
$$ [ h _\R, e] = 2 e, \, [e, f_\R ] = h _\R.$$

A Jacobson's trick \cite[proof of Lemma 9.2.2]{C-M1993}, provides us with an element  $z$ in  the centralizer $\Zz_\g ( e)$  of $e$ in   $\g$ such that
\begin{equation}
(ad_{h_\R} + 2) z = - [ h_\R,  f_\R ] - 2f_\R.
\label{2.2}
\end{equation}
(For  the convenience of the reader we recall that the existence of $z$ satisfying (\ref{2.2}) is obtained by showing the positivity of the eigenvalues
of $ad_{h_\R}$ acting on $\Zz_\g (e)$, hence the equation $(ad_{h_\R} +2) z = -[h_\R, f_\R] - f_\R$ has a solution
$z \in \Zz_\g (e)$ since $-[h_\R, f_\R] - f_\R \in  \Zz_\g (e) $.)
It is easy to see that we can  assume that $z \in \g _{-1}$. Then $(h_\R, e,  f_\R + z )$ satisfies  our condition in Theorem \ref{Theorem.2.1}.i. Any $h$ satisfying  the relation in Theorem \ref{Theorem.2.1}.i is   semisimple, since it is a semisimple  element in the Lie algebra $\ssl(2,\R) = \la e, f, h \ra_\R$. This proves Theorem \ref{Theorem.2.1}.i.

ii) There are two proofs  of this assertion. In the first proof we adapt the argument  in \cite[the proof of Theorem 3.4.10]{C-M1993},(Theorem of Kostant), which has been generalized in Theorem 1(2)
in \cite{Vinberg1979} for graded Lie algebras. Their proof, based on the  $\ssl_2$-theory,   works also for field $\R$. Let us explain their argument adapted to our case. Denote  by  $\Zz_{\g_0} (e)$ the centralizer of $e$ in  $\g_0$. 
 
If $h'$ is another element
satisfying  the condition in Theorem \ref{Theorem.2.1}.i, then  $h -h ' \in \Zz_{\g_0} (e)$.  The relations
in Theorem \ref{Theorem.2.1}.i imply that $h -h ' \in [\g_{ -1}, e]$.  Set
$\u_{\g _0} (e): = \Zz_{\g_0} (e) \cap [\g_{-1}, e]$.  Then $h'-h\in \u_{\g _0} (e)$.

Next, we claim that $\u_{\g_0}(e)$ is  an $ad_h$-invariant nilpotent ideal  of
$\Zz_{\g_0}(e)$. To prove this  claim we use   Lemma 3.4.5  in \cite{C-M1993}.

\begin{lemma}  \cite[Lemma 3.4.5]{C-M1993} Let  $e$ be a  nonzero nilpotent element of a  semisimple Lie algebra $\g$.
Then $\u _\g (e) : = \Zz_\g (e) \cap  [\g, e]$  is an $ad_h$-invariant  nilpotent ideal of $\Zz_\g (e)$.
\end{lemma}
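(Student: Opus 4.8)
The plan is to read off all three properties from the representation theory of $\ssl_2$ applied to the adjoint action of a Jacobson--Morozov triple. First I would fix an $\ssl_2$-triple $(h,e,f)$ completing $e$ and regard $\g$ as a module over $\ssl_2 = \la h,e,f\ra$ via $ad$. Since $\ssl_2$ is semisimple, Weyl's complete reducibility theorem and the classification of finite-dimensional irreducible $\ssl_2$-modules hold over $\R$ as well as over $\C$; in particular $ad_h$ is diagonalizable on $\g$ with integer eigenvalues, and $\g$ splits into irreducibles $V_m$ of highest weight $m\ge 0$ on which $ad_h$ has weights $m, m-2,\dots,-m$ and the raising operator $ad_e$ carries the weight-$j$ space isomorphically onto the weight-$(j+2)$ space for $j\le m-2$ while annihilating the top.

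Next I would identify the two spaces whose intersection defines $\u_\g(e)$. The centralizer $\Zz_\g(e)=\ker(ad_e)$ is exactly the span of the highest-weight vectors of the $V_m$, so it is $ad_h$-stable and all of its $h$-weights are $\ge 0$; write $\Zz_\g(e)=\bigoplus_{j\ge 0}\Zz_\g(e)_j$ for its weight decomposition. On the other hand $[\g,e]=\mathrm{im}(ad_e)$, and on each $V_m$ the image consists of the weight spaces $-m+2,\dots,m$. Hence the top-weight vector of $V_m$ lies in $[\g,e]$ if and only if $m\ge 1$. Intersecting, I obtain the clean description
\[
\u_\g(e)=\Zz_\g(e)\cap[\g,e]=\bigoplus_{j\ge 1}\Zz_\g(e)_j,
\]
i.e. $\u_\g(e)$ is precisely the sum of the strictly positive $ad_h$-weight spaces of $\Zz_\g(e)$.

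From this description the three assertions follow directly. The space $\u_\g(e)$ is $ad_h$-invariant because it is a sum of $ad_h$-eigenspaces. It is an ideal of $\Zz_\g(e)$ because the bracket respects the $h$-grading: for $x\in\Zz_\g(e)_i$ with $i\ge 0$ and $y\in\Zz_\g(e)_j$ with $j\ge 1$ one has $[x,y]\in\Zz_\g(e)_{i+j}$ with $i+j\ge 1$, so $[\Zz_\g(e),\u_\g(e)]\subset\u_\g(e)$. Finally $\u_\g(e)$ is nilpotent because the same positive grading forces each bracket to strictly raise the $h$-weight, so the terms of the lower central series lie in $\bigoplus_{j\ge k}\Zz_\g(e)_j$ and must vanish once $k$ exceeds the finitely many weights occurring in $\g$.

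I expect no serious obstacle here, since the statement is essentially a bookkeeping consequence of the $\ssl_2$-module structure; the one point demanding care is the precise description of $\mathrm{im}(ad_e)$ on each irreducible and the resulting identification of $\u_\g(e)$ with the positive-weight part of $\Zz_\g(e)$ --- in particular keeping the grading direction straight, namely that $ad_e$ raises weights, so that a highest-weight vector is hit by $ad_e$ exactly when its weight is at least $1$. Over $\R$ the only thing worth double-checking is that complete reducibility and the integrality of the weights survive, which they do because $\ssl(2,\R)$ is split semisimple.
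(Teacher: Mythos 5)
Your argument is correct, and it is essentially the proof the paper is relying on: the paper only cites \cite[Lemma 3.4.5]{C-M1993} without reproducing the argument, and the proof in that reference is exactly your identification of $\u_\g(e)$ with the strictly positive $ad_h$-weight part of $\Zz_\g(e)$ via the $\ssl_2$-module decomposition of $\g$, from which the $ad_h$-invariance, ideal, and nilpotency claims all follow from the grading. Your added remark that complete reducibility and integrality of weights persist over $\R$ for the split algebra $\ssl(2,\R)$ is the right point to check and is handled correctly.
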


To obtain  our claim from \cite[Lemma 3.4.5]{C-M1993} we observe that, if
a $\Z_m$-graded  ideal is nilpotent then its  subalgebra consisting of  homogeneous elements of zero degree is a nilpotent ideal  in the  subalgebra $\g_0$.

Set $U_0(e) := \exp  \u_{\g_0} (e)\subset \Zz_{G_0} (e)$.  
\begin{lemma}\label{347cm}  We have
\begin{equation}
Ad_{U_0(e)}(h) =h + \u_{\g_0} (e).
\end{equation}
\end{lemma}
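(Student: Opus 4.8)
The plan is to prove the two inclusions comprising the set equality separately, working throughout with the polynomial map
\[
\phi:\u_{\g_0}(e)\to\g_0,\qquad \phi(v):=Ad_{\exp v}(h)-h=\sum_{k\ge 1}\tfrac{1}{k!}\,ad_v^{\,k}(h),
\]
which is legitimate because every $v\in\u_{\g_0}(e)$ is $ad$-nilpotent (the elements of $\u_\g(e)$ all lie in the strictly positive $ad_h$-eigenspaces, hence are $ad$-nilpotent), so $Ad_{\exp v}=e^{ad_v}$ and $U_0(e)=\exp\u_{\g_0}(e)$ consists precisely of the elements $\exp v$. Since $Ad_{U_0(e)}(h)=h+\phi(\u_{\g_0}(e))$, the lemma is equivalent to the statement that $\phi$ maps $\u_{\g_0}(e)$ onto $\u_{\g_0}(e)$.

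The inclusion $Ad_{U_0(e)}(h)\subseteq h+\u_{\g_0}(e)$ is the ``image'' half and follows immediately from the claim already established, that $\u_{\g_0}(e)$ is an $ad_h$-invariant ideal of $\Zz_{\g_0}(e)$. Indeed, for $v\in\u_{\g_0}(e)$ the linear term $ad_v(h)=-ad_h(v)$ lies in $\u_{\g_0}(e)$ by $ad_h$-invariance, and for $k\ge 2$ I would argue inductively: writing $ad_v^{\,k}(h)=ad_v\big(ad_v^{\,k-1}(h)\big)$ with $ad_v^{\,k-1}(h)\in\u_{\g_0}(e)$ and $v\in\Zz_{\g_0}(e)$, the ideal property forces $ad_v^{\,k}(h)\in\u_{\g_0}(e)$. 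Hence $\phi(v)\in\u_{\g_0}(e)$.

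For the reverse inclusion (surjectivity of $\phi$) the decisive input is that $ad_h$ acts on $\u_{\g_0}(e)$ with strictly positive integer eigenvalues. I would obtain this from $\ssl_2$-representation theory for the triple $(h,e,f)$: the centralizer $\Zz_\g(e)$ is spanned by the highest-weight vectors of the $ad$-action of $\langle h,e,f\rangle\cong\ssl_2$ on $\g$, and a highest-weight vector that also lies in $[\g,e]=\mathrm{im}(ad_e)$ must belong to a nontrivial irreducible summand, hence have $ad_h$-eigenvalue $\ge 1$; intersecting with $\g_0$ gives the assertion for $\u_{\g_0}(e)$, so $ad_h$ is invertible there. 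Then, using the $ad_h$-eigenspace decomposition of $\u_{\g_0}(e)$ into weights $\lambda\ge 1$ and the fact that $ad_v$ raises $ad_h$-weight (so $ad_v^{\,k}(h)$ contributes only to weights $\ge k$, involving only weight components of $v$ strictly below the target weight when $k\ge 2$), I would solve $\phi(v)=w$ for arbitrary $w\in\u_{\g_0}(e)$ by determining the weight components of $v$ inductively from the lowest weight upward: at weight $N$ the equation reads $-N\,v_N+(\text{a polynomial in the }v_\mu,\ \mu<N)=w_N$, which is uniquely solvable for $v_N$. This produces $v\in\u_{\g_0}(e)$ with $Ad_{\exp v}(h)=h+w$ and completes the proof.

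I expect the reverse inclusion to be the main obstacle, and within it the positivity of the $ad_h$-spectrum on $\u_{\g_0}(e)$; once that is secured the triangular induction is routine. A shorter but less elementary alternative would be to note that the connected unipotent group $U_0(e)$ acts algebraically on the affine space $h+\u_{\g_0}(e)$ with orbit tangent space $ad_h\big(\u_{\g_0}(e)\big)=\u_{\g_0}(e)$ at $h$, so the orbit is open, and then invoke closedness of unipotent orbits (Kostant--Rosenlicht) to conclude it is everything. I prefer the explicit weight-graded argument here, since it manifestly respects the real structure and avoids appealing to closedness of orbits over $\R$.
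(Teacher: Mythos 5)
Your proof is correct and follows essentially the same route as the paper's: both rest on the positive $ad_h$-weight decomposition of $\u_{\g_0}(e)$ supplied by $\ssl_2$-theory and then solve $Ad_{\exp v}(h)=h+w$ by a weight-by-weight triangular induction, which is exactly the paper's recursive construction of the approximants $z_j$. The only addition is that you also spell out the forward inclusion $Ad_{U_0(e)}(h)\subseteq h+\u_{\g_0}(e)$ via the ideal property, a step the paper leaves implicit.
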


We note that  Lemma \ref{347cm} is  a version of Lemma 3.4.7 in \cite{C-M1993}  due to Kostant.
\begin{proof}[Proof of Lemma \ref{347cm}]
The proof of Lemma 3.4.7 in \cite{C-M1993} carries to  our case easily, since   $\u_{\g_0}(e)$ is  $ad_h$-invariant.
Set
$$\u(e)_k : = \{ x\in  u_{\g_0} (e)|\, [h, x] = k  x\}.$$
Using the $\ssl_2$ theory we have following  decomposition
$$\u_{\g _0} (e)= \oplus _{ i = 1} ^ n  \u  (e)_k  $$
for some finite positive integer $n$.

To prove  Lemma \ref{347cm} it suffices to find  an element  $z\in \u_{\g _0} (e)$ for a given  $v \in \u_{\g _0} (e)$ such that
$Ad_{\exp  z} (h)  =h + v$.
We   approximate $z$ by $z_j$ inductively such that
\begin{equation}
z_j \in \oplus _{1\le i \le j}  \u(e)_i,  \label{a}
\end{equation}
  and 
\begin{equation}
Ad_{\exp  z_j} h  - ( h + v) \in  \oplus _{j +1\le i \le m} \u (e) _i.
\label{b}
\end{equation}
Set
$$z_{j +1} ' : = \text{ the component  of } ( Ad_{\exp_{z_j} } h -  ( h + v) )  \text { in } \u(e)_{j+1}.$$
Let
$$z_{j +1} = z_j +{ 1\over j +1} z'_{j+1} \in \oplus _{ 1\le i \le j +1}  \u (e)_i.$$
Then we check  immediately that    properties (\ref{a}) and (\ref{b})  carry over to $z_{j +1}$. Thus if we begin with
$z_1: = - v_1$, where $v_1$ is the component of $v$ is $\u(e)_1$, and setting $z: = z_n$, we get 
$Ad_{\exp _z} (h) = v$, as desired.
This proves  Lemma \ref{347cm}.
\end{proof}

Noe let us complete the proof of  Theorem \ref{Theorem.2.1}.ii.  We need to show the uniqueness of $h$  up to conjugacy
via an element  in $\Zz_{G_0} (e)$.  Suppose the opposite, i.e. there are two  $\ssl_2$-triples $(f, h, e)$ and
$(f', h, e')$ satisfying  the condition of Theorem \ref{Theorem.2.1}.i.  Then $h- h' \in\u_{\g_0} (e)$ as we have 
observed above.  By Lemma  \ref{347cm} there is an element  $x \in U_0 (e)\subset \Zz_{G_0}(e)$ such that
$\exp _x (h) = h '$ and $\exp _x (e) = e$.  This implies $\exp _x (f) = f'$. This  proves Theorem \ref{Theorem.2.1}.ii.

The second  proof  of  Theorem \ref{Theorem.2.1}.ii   uses  the  Vinberg argument in \cite[proof of Theorem 1 (2)]{Vinberg1979}. 
The first and the second proofs are distinguished by   different methods to prove  Lemma \ref{347cm}. In the second proof   the main  point  is to show that  the orbit $Ad_{U_0(e)} (h)$ is open  and closed in $ h + \u_{\g_0}(e)$. We   remark that  the   closedness of the orbit
$Ad_{U_0 (e) }(h) $   holds, since  this orbit is a component of  the  intersection of  $\g_1$ with the complexified  orbit $Ad_{U_0^\C (e) } h$, which is closed  by  \cite[proof of Theorem 1(2)]{Vinberg1979}.  The openess of  the orbit also holds, since $[h, \u_{\g_0}(e)] = \u_{\g_0}(e)$, which is a  consequence  of the identity $[h, \u_{\g_0 ^ \C} (e)] =
\u _{\g _0 ^\C} (e)$ proved by Vinberg in \cite{Vinberg1979}.

iii) Theorem \ref{Theorem.2.1}.iii  follows from the uniqueness of an $\ssl_2$-triple in a complex Lie algebra, see e.g  \cite[Lemma 3.4.4]{C-M1993}, or \cite[Theorem 1(3)]{Vinberg1979}.
\end{proof}

\medskip

Thanks to the JMV theorem we can characterize semisimple elements and nilpotent elements in $\g_1$
using the geometry of their $Ad_{G_0}$-orbits. 

\begin{lemma}\label{Lemma.2.3}   Element $x \in  \g_1$ is nilpotent if   and only if the closure of its orbit
 $Ad _{ G_0}(x)$  contains zero.   Element $ x\in \g_1$ is semisimple if and only if  its orbit  $Ad _{ G _0}(x)$
is closed. 
\end{lemma}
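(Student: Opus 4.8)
The plan is to prove the two equivalences separately, using the Jordan decomposition, the JMV theorem (Theorem \ref{Theorem.2.1}), and the ring of $Ad_{G_0}$-invariant polynomials on $\g_1$, reducing the one genuinely hard implication to the known complex case.

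First I would settle the nilpotency criterion. For the forward direction, if $x$ is nilpotent then Theorem \ref{Theorem.2.1}.i supplies an $\ssl_2$-triple $(h,x,f)$ with $h\in\g_0$ and $[h,x]=2x$. Since $h\in\g_0$, the curve $s\mapsto\exp(sh)$ lies in $G_0$ and $Ad_{\exp(sh)}x=\exp(s\,ad_h)x=e^{2s}x$, which tends to $0$ as $s\to-\infty$; hence $0\in\overline{Ad_{G_0}(x)}$. For the converse I use invariant polynomials: writing $\det(\lambda-ad_z)=\sum_{k\ge0}p_k(z)\,\lambda^{\dim\g-k}$, each coefficient $p_k$ with $k\ge1$ is an $Ad_G$-invariant (hence $Ad_{G_0}$-invariant) polynomial function of $z\in\g_1$ vanishing at $z=0$, and it is constant on $\overline{Ad_{G_0}(x)}$ by continuity. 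If $0$ lies in this closure then $p_k(x)=p_k(0)=0$ for all $k\ge1$, so $ad_x$ has $0$ as its only eigenvalue and is therefore nilpotent; as $\g$ is semisimple this means $x$ is nilpotent.

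Next comes the semisimplicity criterion. For ``closed orbit $\Rightarrow$ semisimple'' I argue contrapositively. Write the Jordan decomposition $x=x_s+x_n$ with $x_s,x_n\in\g_1$ and $[x_s,x_n]=0$, and assume $x_n\neq0$. The centralizer $\Zz_\g(x_s)$ is a $\Z_m$-graded reductive subalgebra, and $x_n$ is a nonzero nilpotent element of its degree-$1$ part lying in its semisimple derived subalgebra; applying Theorem \ref{Theorem.2.1}.i there yields $h'\in\g_0$ with $[h',x_n]=2x_n$ and $[h',x_s]=0$. Then $Ad_{\exp(sh')}x=x_s+e^{2s}x_n\to x_s$ as $s\to-\infty$, so $x_s\in\overline{Ad_{G_0}(x)}$. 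Were the orbit closed we would get $x_s=Ad_g(x)$ for some $g\in G_0$; since $Ad_g$ preserves the Jordan decomposition, comparing semisimple and nilpotent parts forces $Ad_g(x_n)=0$, i.e. $x_n=0$, a contradiction.

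The remaining implication, ``$x$ semisimple $\Rightarrow$ orbit closed'', is the main obstacle, because over $\R$ the invariant polynomials no longer separate the several real orbits that may share the same invariants, so a direct argument fails. Here I would complexify: $x$ is still semisimple in $\g^\C$, so by Vinberg's complex form of this lemma the orbit $Ad_{G_0^\C}(x)$ is closed in $\g_1^\C$, whence the set of its real points $Ad_{G_0^\C}(x)\cap\g_1$ is closed in $\g_1$. Because $x$ is real and the grading is defined over $\R$, the conjugation $\tau_\g$ stabilizes the tangent space $[\g_0^\C,x]$ of the complex orbit at $x$, which is therefore the complexification of $[\g_0,x]$; consequently $\dim_\R Ad_{G_0}(x)=\dim_\C Ad_{G_0^\C}(x)$ and, by the same count at every real point, the real $G_0$-orbits are open in the real points of the complex orbit. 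A set partitioned into open orbits has each orbit closed, so $Ad_{G_0}(x)$ is closed in $\g_1$. The crux is exactly this passage from the complex orbit to the real one, governed by the identity $[\g_0^\C,x]=[\g_0,x]\otimes_\R\C$.
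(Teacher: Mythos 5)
Your proof is correct, and its overall architecture coincides with the paper's: the JMV theorem produces the degeneration of a nilpotent $x$ to $0$; the Jordan decomposition plus a grading element shows a non-semisimple element has non-closed orbit; and the semisimple case is reduced to the closedness of the complex orbit $Ad_{G_0^\C}(x)$. The differences are local. For the converse nilpotency implication the paper passes to $\g_1^\C$ and invokes Vinberg's Proposition 1, whereas you argue directly with the $Ad_{G_0}$-invariant coefficients of $\det(\lambda-ad_z)$; your version is more elementary and self-contained (it in effect reproves the complex statement it replaces). For the non-semisimple direction the paper extracts, from the proof of Vinberg's Proposition 3, an element $l\in\Zz_{\g^\C}(x_s)$ with $[l,x_n]=x_n$ and takes its $\g_0$-component $l'$, while you apply the real JMV theorem inside the graded reductive centralizer $\Zz_\g(x_s)$ --- this needs the small but true facts that this centralizer inherits the grading and that $x_n$ lies in its semisimple commutant, which you duly note. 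You also make explicit two points the paper leaves implicit: that $x_s\notin Ad_{G_0}(x)$ because $Ad_g$ preserves the Jordan decomposition (without this, ``$x_s$ lies in the closure'' does not yet contradict closedness), and that each real orbit is open in $Ad_{G_0^\C}(x)\cap\g_1$ via the tangent-space identity $[\g_0^\C,x]=[\g_0,x]\otimes_\R\C$, which is the precise justification behind the paper's closing sentence that each orbit in the intersection is closed.
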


\begin{proof} Suppose that $x\in \g _1$ is nilpotent.
 By Theorem \ref{Theorem.2.1},   there is an element $h \in  \g _0 $ such that $[ h, x] = x$.  Clearly $\lim _{ t \to -\infty} Ad _{ \exp ( t\cdot h)}  (x) = 0$.  This proves the ``only if" part of the first assertion of
 Lemma \ref{Lemma.2.3}.

Now we suppose that  the closure of the orbit $Ad_{G_0} (x)$ contains zero. 
 Then  the  orbit  $Ad_{ \rho( G_0)} (x)$
contains zero, in particular $Ad _{ G _0 ^\C}(x)$ contains zero. By  \cite[Proposition 1]{Vinberg1976}, 
$x$ is a nilpotent element in $\g _ 1 ^\C$. Hence $x$ is a nilpotent element in $\g_1$. This proves the ``if" part
of the first assertion.

Let us  prove the second assertion of Lemma \ref{Lemma.2.3}. If $x$ is not semisimple, let us consider its Jordan decomposition  $x = x_s+ x_n$. The proof of  \cite[Proposition 3]{Vinberg1976}  yielda the existence of an element $l $ in the centralizer 
$\Zz _{\g^\C} (x_s)$ such that $[l, x_n] = x_n$. Writing $l = l' + l''$ where $l' \in \g_0$ and $l'' \in  \ \sqrt{-1} \g_0$,  we find that $[l' , x_n] = x_n$.
Then $\lim _{t \to - \infty} Ad _{\exp t l'} (x_n) = x_s$.   Hence the orbit
$Ad_{ G_0} (x)$ is not closed. This proves the ``if" part of the second assertion.

Now  assume that $x$ is semisimple. Then  the  orbit $Ad _{ G ^\C _0 }  (x)$ in $ \g _1 ^\C$ is  closed.
Hence the intersection  of this orbit   with  $\g_1 \subset \g_1 ^\C$ is closed  in $\g_1$.
Note that this intersection is a  disjoint  union of   $Ad _{ G_0} $-orbits  of  elements in $\g _1$. Since  each  orbit $Ad_{G_0} (y')$
is a submanifold  in $\g_1$,  it follows that   each  $Ad_{G_0}$-orbit in  this intersection is also closed. This proves the ``only if" part
of the second assertion.
\end{proof}

We adopt the following definition in \cite{Vinberg1976}.  Let $\g =  \oplus_{i =1}^m \g_i$ be  a   $\Z_m$-graded
semisimple  Lie algebra.
{\it A Cartan subspace}  in $\g_1$  (resp. $\g_1 ^\C$)  is a maximal subspace  in $\g_1$ (resp. in $\g_1 ^\C$) consisting of   commuting  semisimple  elements.  The classification of Cartan subspaces in $\g_1$ is well-known for
$m \le 2$, see \cite{Kostant1955}, \cite{Sugiura1959}, \cite{OM1980},  and unknown for $m \ge 3$.

\section{R-compatible Cartan involutions}

In this section we show  the existence of a Cartan involution  of a real $\Z_m$-graded   semisimple Lie algebra $\g$ which reverses the $\Z_m$-gradation on $\g$, see Theorem \ref{Theorem.3.7}. As a consequence,  there
is a 1-1 correspondence between $Ad_{G_0^ \C}$-orbits (resp. $Ad_{G_0}$-orbits)  on $\g_i ^\C$ and
$\g_{-i} ^ \C$, (resp.  on $\g_i$ and $ \g_{-i}$), see Corollary \ref{rev}.  
 We  also give important examples  of  real $\Z_m$-graded  semisimple Lie algebras.

Let  $\g = \oplus _{i =0} ^{m-1} \g_i$  be a $\Z_m$-graded semisimple Lie algebra and $\theta^ \C$  the automorphism of $\g ^\C$ associated with this  induced gradation.    It is easy to check that
\begin{equation}
 \tau _\g \theta ^ \C = (\theta ^\C ) ^ {-1} \tau _\g .
 \label{comp}
 \end{equation}
Since $\tau _\g ^ 2 = Id$,  (\ref{comp})  holds if and only if 
\begin{equation}
 \tau _\g (\theta ^ \C) ^{-1} = (\theta ^\C ) \tau _\g.
\label{comp2}
\end{equation}
Now let $\g$ be a real form in $\g ^ \C$ with a  $\Z_m$-gradation generated by $\theta ^\C$. If $\g$ satisfies
the relation (\ref{comp}), then   for any $ x\in \g^\C_ k$
$$\theta ^\C (\tau_\g ( x)) = \tau_{\g} (\theta  ^ \C) ^ {-1} (x) = \tau _\g ( \exp ^{\frac{- 2\pi \sqrt{-1} k }{m}} x) = \exp ^{\frac{2\pi \sqrt{-1}k }{m}} \tau _\g (x)  .$$
Hence  $\tau _\g ( \g_k ^ \C)  = \g _k ^ \C$, and therefore
\begin{equation}
 \g = \oplus _i ( \g \cap \g_i ^ \C).
\label{grad}
\end{equation}
 Thus  we  say that a real form  $\g $ of $\g ^ \C$ is {\it compatible  with $\theta^\C$}, if  (\ref{comp}) holds. Equivalently (\ref{comp2}) holds, and equivalently (\ref{grad}) holds.
 
\begin{remark}\label{inv} If $m \not= 2$, any real form $\g$  compatible with $\theta ^ \C$  is not invariant
under $\theta ^ \C$ unless  $m$ is even and  the only nonzero components  of $\g$ have  degree $0$ or
$m/2$. A real form $\g$   is invariant under $\theta ^\C$, if and only if  $\tau_\g $ commutes
with $\theta  ^\C$.
\end{remark}

Let $\u$  be  a compact real form of $\g^\C$ which is compatible with $\g$, i.e.  $\tau_\g \tau _\u = \tau _\u \tau _\g$.  Then $\g = \k \oplus \pg$ where
$\k = \g \cap \u$ and $\pg = \g \cap i \u$. The restriction of $\tau _u$ to $\g$ is a Cartan involution of
$\g$, which we also denote by $\tau _\u$, if  no misunderstanding  arises. 

\begin{definition}\label{Definition.3.2} A    Cartan involution $\tau_\u$ of a real $\Z_m$-graded semisimple   Lie algebra $\g =\oplus_{i=1}^m \g _i$ is called {\it R-compatible} with 
 the $\Z_m$-gradation, if $\u$ is invariant  under  the automorphism $\theta^\C$ associated with this gradation: 
 $\tau _\u \theta ^ \C = (\theta ^ \C)  \tau _\u$.
 \end{definition}
  
Clearly,  $\tau_\u$ is $R$-compatible with the $\Z_m$-gradation, if and only if $\tau_\u$ reverses the gradation
on $\g$ : $\tau _\u ( \g _k) = \g _{-k}$.   That explains   our use of the notion  of  a $R$-compatible involution.  

\begin{example}\label{Example.3.3} i)  Any  real $\Z_2$-graded semisimple Lie algebra $\g = \g_0 \oplus \g_1$  has a R-compatible Cartan 
involution,  see  \cite{Berger1957}, Lemma 10.2.  The classification  of all
$\Z_2$-graded  simple Lie algebras  has been given in \cite{Berger1957}.

ii) Let $x\in \g_1$. Let $\Zz_\g (x)$ be  the centralizer of $x$ in $\g$.  Clearly,  its complexification
$\Zz_{\g ^\C} (x)$ is invariant under  the action of $\theta ^\C$. Hence $\Zz_\g (x)$ inherits the $\Z_m$-grading, and  the commutant
$\Zz _\g (x)'$ of $\Zz_\g (x)$  is also a  real $\Z_m$-graded semisimple Lie
algebra. If $m=2$ and $x\in \g_1 \cap \pg$ or $x\in \g_1 \cap \k$,   the Cartan compatible involution $\tau_\u$ also preserves  $\Zz_{\g} (x)$.

iii) If $(\g, \tau_\u)$ and $(\g ', \tau _{\u'})$ are   real $\Z_m$-graded semisimple Lie algebras with
R-compatible Cartan involutions $\tau _\u $ and $\tau_{\u ' }$, then
their direct sum $\g \oplus \g '$ is also a real $\Z_m$-graded semisimple Lie algebra equipped with  the R-compatible
Cartan involution $\tau _{\u \oplus \u '}$.  Conversely, if $m$ is prime any   real $\Z_m$-graded semisimple Lie algebra is a direct sum of real $\Z_m$-graded simple Lie algebras (see \cite{Vinberg1976} for a similar assertion over $\C$, which implies 
our assertion).

iv) Let us consider the split algebra $\g = \e_{7(7)}$ - a  normal real form of the
complex Lie algebra $\e_7$. The  complex algebra $\g ^\C = \e_7 $ has the following   root system\\
$\Sigma=\{ \eps _i-\eps _j,  \eps_p + \eps _q + \eps _r + \eps _s, |i \not = j, \, ( p,  q, r, s \text {  distinct}), \sum _{i =1} ^8 \eps _i = 0\}$.\\
For the purpose of  fixing notations we recall  the following root  decomposition  of a complex
semisimple Lie algebra $\g^\C$  and its compact real form $\u$, see e.g. \cite[Theorem 4.2]{Helgason1978} and
\cite[Theorem 6.3]{Helgason1978}. Let us choose a  
Cartan subalgebra $\h_0^\C$  of $\g ^\C$. Denote by $E _\alpha$, $\alpha \in \Sigma$, the   corresponding  root  vectors such that $[E_\alpha ,E_{-\alpha}] =
{2 H_\alpha \over \alpha ( H_\alpha)}\in \h_0 ^\C$, see e.g. \cite{Helgason1978}, p.258.
We  decompose $\g$ as
\begin{equation}
\g^\C = \oplus _{\alpha \in \Sigma} \la H_\alpha \ra _\R\oplus _{\alpha\in \Sigma} \la E_\alpha\ra _\R \oplus _{\alpha\in \Sigma} \la E _{ - \alpha}\ra _\R .
\label{3.4}
\end{equation}
$\g ^\C$ has the following compact form $\u$, which is compatible  with $\g$:\begin{equation}
 \u = \oplus _{\alpha \in \Sigma} \la iH_\alpha\ra _\R \oplus_{\alpha\in \Sigma} \la i( E_\alpha + E _{ - \alpha})\ra _\R \oplus_{\alpha\in \Sigma}  \la ( E _\alpha - E_{-\alpha} )\ra _\R.
 \label{3.5}
 \end{equation}

Let $\theta ^\C$ be  the  involution of $\e_7$ defined in \cite{Antonyan1981} as follows\begin{equation}
{\theta^\C} _{| \h_0 } = Id,
\label{3.6.1}
\end{equation}
\begin{equation}
{\theta^\C}  ( E _\alpha ) = E_\alpha, \text { if }  \alpha  = \eps _i - \eps _j ,\\
\label{3.6.2}
\end{equation}
\begin{equation}
{\theta ^\C} ( E_\alpha ) =  - E _\alpha , \text { if } \alpha = \eps _i + \eps _j + \eps _k + \eps _l.
\label{3.6.3}
\end{equation}
Then $\theta ^\C (\g) = \g$, and $\theta ^\C (\u) = \u$. Hence $\theta^\C$  commutes with $\tau_\g$ as well as with $\tau _\u$.  Denote  by $\theta$ the restriction of $\theta ^\C$ to $\g$. Automorphism $\theta$  defines a $\Z_2$-gradation:
 $\g = \g _0 \oplus \g_1$, where $\g_0 = \ssl (8, \R)$. Clearly $\tau_\u$ is a R-compatible  with this $\Z_2$-gradation.
In \cite{Antonyan1981}  Antonyan proved that the space  $g-1 ^\C$ is linearly isomorphic to the space
$\Lambda ^4 (\C^8)$ of 4-vectors on $\C^8$.  Let $G_0 ^\C \subset  E^\C _7$ be the connected Lie subgroup with the Lie algebra $\g_0 ^\C$. Antonyan showed that the adjoint action of
$G_0 ^\C$ on $\g_1 ^\C$ is  exactly  the canonical action of $SL (8, \C)$ on the space  $\Lambda ^4 (\C^8)$.
 
v) Let us consider  a  real $\Z_3$-graded simple   Lie algebra  $\e _{8 (8)}$ which is a normal form
of  the complex algebra $\e_8$.  The root system $\Sigma$  of $\e_8$ is
$$\Sigma = \{ \eps_i -\eps_j, \pm (\eps_i + \eps_j+ \eps_k)\},\, (i,j,k \text { distinct}), \sum _{i = 1} ^9\eps_i  = 0\}.$$
In \cite{EV1978}  Vinberg  and Elashvili proved that there is an automorphism $\theta ^\C$ of order 3 on $\e_8$ defined by the following formulas
$$ \theta^\C  _{|\la H_{\alpha}, E_{\alpha}, \:  \alpha  = \eps_i - \eps_j\ra _\C}  = Id, $$
$$\theta^\C _{|\la E_{\alpha}, \alpha = (\eps_i + \eps _j + \eps _k)\ra _\C } = \exp  (i2\pi/3) \cdot Id, $$
$$\theta^\C _{|\la  E_{\alpha}, \alpha = -(\eps_i + \eps _j + \eps _k)\ra _\C } = \exp  (-i2\pi/3) \cdot Id. $$
It is easy to see that  $\theta ^\C$ defines a $\Z_3$-grading  on $\e_8$ as well as on 
$e_{8(8)}$.  Namely, we have $\e_{8 (8)} = \g_0 \oplus \g_1\oplus \g_{-1}$  where
$$\g _0 = \la  H_{\alpha}, E_{\alpha}, \:  \alpha  = \eps_i - \eps_j \ra _\R, $$
$$\g_1 = \la  E_{\alpha}, \alpha = (\eps_i + \eps _j + \eps _k)\ra _\R ,$$
$$\g_{-1} = \la  E_{\alpha}, \alpha = -(\eps_i + \eps _j + \eps _k)\ra _\R .$$
The compact form $\u$ of $\e_8$ defined as in   (\ref{3.5}) is   R-compatible with
this $\Z_3$-grading of $\e_{8(8)}$.
In \cite{EV1978}  Vinberg and Elashvili  proved that  the  space $\g _1 ^\C$ is  linearly isomorphic to
the space $\Lambda^3 (\C ^ 9)$ of 3-vectors on $\C ^9$ and the space  $\g _{-1} ^\C$ is  linearly isomorphic to  the space $\Lambda ^3 (\C^9) ^ *$
of 3-forms on $\C ^9$. Let $G _0 ^\C\subset E_8 ^\C$ be the connected Lie  subgroup with the  Lie subalgebra $\g_0^\C$. 
Vinberg and Elashvili showed that the adjoint action of 
$G_0 ^C$
on $\g _1 ^\C$ (resp. $\g_{-1} ^\C $)  is   exactly the canonical action
of $SL (9, \C)$ on the space $\Lambda ^3 (\C ^9)$  (resp. $\Lambda ^3 (( \C ^9 )^* )$.   
\end{example}

The following Theorem is  an analogue of Theorem 7.1 in \cite{Helgason1978} for  real $\Z_m$-graded Lie  semisimple Lie algebras.
The case  $m = 2$ is well-known, see \cite{Berger1957}.

\begin{theorem}\label{Theorem.3.7}   Let $\u'$ be a  real compact form of $\g ^\C$, which is  invariant under $\theta ^\C$.\\
1) There  exists  an automorphism $\phi $ of $\g ^\C$, which commutes with $\theta ^\C$,   such that
$\u =\phi (\u')$ is invariant under  $\tau _\g$  and under   $\theta ^\C$.\\
2)  Any  real $\Z_m$-graded  semisimple  Lie algebra
has a  Cartan involution, which reverses the gradation.
\end{theorem}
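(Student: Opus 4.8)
The plan is to reproduce the classical proof of conjugacy of compact real forms (the analogue of \cite[Theorem 7.1]{Helgason1978}), inserting one extra verification that keeps the whole construction compatible with $\theta^\C$; part 2 will then follow from part 1 together with the existence of a $\theta^\C$-invariant compact real form. Write $\sigma := \tau_\g$ and $\tau_0 := \tau_{\u'}$ for the conjugate-linear involutive automorphisms of $\g^\C$ attached to $\g$ and to $\u'$. By hypothesis $\u'$ is $\theta^\C$-invariant, so $\tau_0\theta^\C = \theta^\C\tau_0$, whereas $\g$ is merely compatible with $\theta^\C$, so by (\ref{comp}) $\sigma\theta^\C = (\theta^\C)^{-1}\sigma$. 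Set $N := \sigma\tau_0$, a complex-linear automorphism, and $P := N^2$. With respect to the positive-definite Hermitian form $\langle X,Y\rangle := -B(X,\tau_0 Y)$ ($B$ the Killing form), $N$ is self-adjoint and hence $P = N^2$ is self-adjoint and positive definite; this is exactly the classical computation and is unchanged here. One also checks $\sigma P\sigma^{-1} = \tau_0 P\tau_0^{-1} = P^{-1}$.

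The new point, and the heart of the matter, is that $P$ commutes with $\theta^\C$. From $\sigma\theta^\C = (\theta^\C)^{-1}\sigma$ and $\tau_0\theta^\C = \theta^\C\tau_0$ one gets $\theta^\C N = N(\theta^\C)^{-1}$ and, dually, $(\theta^\C)^{-1}N = N\theta^\C$: the twist contributed by $\sigma$ is precisely the one obstructing commutation, but on a single power of $N$ it does not vanish. It is only after squaring that the two opposite twists cancel, namely $\theta^\C P(\theta^\C)^{-1} = (N(\theta^\C)^{-2})^2 = N\cdot N(\theta^\C)^{2}(\theta^\C)^{-2} = N^2 = P$, where $(\theta^\C)^{-2}N = N(\theta^\C)^{2}$ is used. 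I expect this cancellation to be the main obstacle to get right, and it is the reason the construction must be carried out on $P = N^2$ rather than on $N$.

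Since $P$ is a positive-definite self-adjoint automorphism, $\log P$ is a derivation and $P^t := \exp(t\log P)$ is an automorphism of $\g^\C$ for every real $t$; by the spectral calculus each $P^t$ commutes with $\theta^\C$ (because $P$ does) and satisfies $\sigma P^t\sigma^{-1} = \tau_0 P^t\tau_0^{-1} = P^{-t}$. Put $\phi := P^{1/4}$, which commutes with $\theta^\C$. Setting $\tau_\u := \phi\tau_0\phi^{-1}$, the relation $P^{1/2}N^{-1}P^{1/2} = N$ (valid since $N$ is self-adjoint) yields $\sigma\tau_\u = \tau_\u\sigma$, so $\tau_\u$ is the conjugation of the compact real form $\u := \phi(\u')$ and $\u$ is $\tau_\g$-invariant. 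Finally $\theta^\C(\u) = \theta^\C\phi(\u') = \phi\,\theta^\C(\u') = \phi(\u') = \u$, so $\u$ is $\theta^\C$-invariant as well. This proves part 1.

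For part 2 one first needs a compact real form to feed into part 1: since $\theta^\C$ generates a finite, hence compact, group of automorphisms, such a group preserves some compact real form of $\g^\C$ by the standard theory of compact real forms, giving a $\theta^\C$-invariant $\u'$. Applying part 1 produces a compact real form $\u$ invariant under both $\tau_\g$ and $\theta^\C$; in particular $\tau_\g$ and $\tau_\u$ commute, so, as recalled before Definition \ref{Definition.3.2}, the restriction of $\tau_\u$ to $\g = \k\oplus\pg$ is a Cartan involution of $\g$. Because $\u$ is $\theta^\C$-invariant, $\tau_\u$ commutes with $\theta^\C$, which by the remark following Definition \ref{Definition.3.2} is equivalent to $\tau_\u(\g_k) = \g_{-k}$; hence this Cartan involution reverses the $\Z_m$-gradation, completing the proof.
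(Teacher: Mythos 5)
Your proposal is correct and follows essentially the same route as the paper: the classical Helgason argument with $P=(\tau_\g\tau_{\u'})^2$ positive self-adjoint for the Hermitian form $-B(X,\tau_{\u'}Y)$, the observation that the opposite twists from $\tau_\g\theta^\C=(\theta^\C)^{-1}\tau_\g$ and $\tau_{\u'}\theta^\C=\theta^\C\tau_{\u'}$ cancel on the square so that $\phi=P^{1/4}$ commutes with $\theta^\C$, and then part 2 from the existence of a $\theta^\C$-invariant compact real form. Your explicit computation $\theta^\C P(\theta^\C)^{-1}=(N(\theta^\C)^{-2})^2=N^2$ is a welcome spelled-out version of the step the paper only asserts.
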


\begin{proof} 1) Our arguments are similar to those in the proof of  \cite[Theorem 7.1]{Helgason1978}. Let $B$ denote the Killing form on $\g ^\C \times \g ^\C$. The  Hermitian form $B_{\u'}$ defined on $\g ^\C \times \g ^\C $ by
$$ B _{\u'} (X, Y) = -B(X ,\tau _{\u'}( Y) ) $$
is  strictly positive definite, since $\u'$ is compact. 
The composition $\tau_g\tau_{\u'}$ is an automorphism of $\g ^\C$, so it leaves
the Killing form invariant. Thus we have
\begin{equation}
B( \tau _\g \tau _{\u '}X, \tau _{\u '} Y) = B ( X, (\tau _\g \tau _{\u '})^{-1} \tau _{\u '} Y)
\label{hel1}
\end{equation}
Taking into account  $\tau _\g ^2 = \tau _{\u '} ^ 2 =  Id$ we get  from (\ref{hel1})
$$B( \tau _\g \tau _{\u '}X,  Y)  = B( X, {\tau _\g \tau _{\u '}}^{-1}\tau _{\u '} Y)= B( X, \tau _{\u '}(\tau _\g \tau _{\u '}) Y)= B_{\u '}( X, \tau _\g \tau _{\u '} Y).$$
 Hence  $( \tau _\g \tau _{\u'} ) ^2 $ is  positive self-adjoint   w.r.t. $B_{\u'}$,  moreover it
commutes  with $\theta ^\C$, because  $\tau _\g \theta ^ \C = (\theta ^\C)^{-1}\tau_\g$ and $\tau _{\u '}$ commutes with $\theta ^\C$. 
It follows that the automorphism $ \phi :=  [(\tau _\g \tau _{\u'} ) ^2 ]^ {1/4}$  commutes with $\theta ^\C$.
(To see it, we choose an  orthogonal basis  $(e_j)$ of $\g^ \C$  w.r.t. $B_{\u'}$ which are also  eigenvectors  with eigenvalues
$a_i >0$ of $ ( \tau _\g \tau _{\u'} ) ^2 $  for all $i$.
We note that $\theta ^ \C$ commutes with $( \tau _\g \tau _{\u'} ) ^2 $  if and only if $\theta (e_i)$ is also eigenvector  of $( \tau _\g \tau _{\u'} ) ^2 $ with value $a_i$ for all $i$. 
Clearly,  $(e_i)$  and $\theta^\C (e_i)$ are also eigenvectors  of $[( \tau _\g \tau _{\u'} ) ^2]^{1/4} $  with eigenvalue $(a_i) ^{1/4}$. Therefore $\theta ^\C$  commutes also with  $ [( \tau _\g \tau _{\u'} ) ^2]^{1/4} $.)
Hence $\phi (\u')$
is invariant under $\theta ^\C$. 

The  invariance  of $\phi (\u')$ under $\tau _\g$ has been shown  in the proof of  \cite[Theorem 7.1]{Helgason1978}.  (For the convenience of  the reader  we briefly recall the proof. The invariance of $\phi (\u')$  under $\tau _\g$ 
is equivalent to the identity
\begin{equation}
\tau _\g \tau _{\phi (\u')} = \tau _{\phi (\u')} \tau _\g.
\label{hel2}
\end{equation}
Using the relation
$$\tau _{\u'} (\tau_\g \tau _{\u'} ) \tau _{\u '} ^{-1} = (\tau _\g \tau _{\u'} ) ^{-1} $$
we  get
\begin{equation}
\tau _{\u'} \phi \tau _{\u '} ^{-1} = \phi  ^{-1}
\label{hel3}
\end{equation}
Note that $\tau _{\phi (\u')} = \phi \tau _{\u'} \phi ^{-1}$.  Using (\ref{hel3}) and  $\phi = [ (\tau _\g \tau _{u'} ) ^2 ] ^{1/4}$ we get easily that the LHS of (\ref{hel2}) is equal  to RHS of (\ref{hel2}) and equal to $Id$.) This proves the first assertion of Theorem \ref{Theorem.3.7}.

2) By Lemma 5.2, chapter X in \cite{Helgason1978}, p. 491,  there is a real compact form $\u'$ of $\g ^\C$ which is invariant under 
$\theta ^\C$. Taking into account  the first assertion of Theorem \ref{Theorem.3.7}, we prove the second assertion. 

Here is another  short proof  of the second assertion due to  Vinberg \cite{Vinberg2009}.  Let us consider the group  $G(\theta ^\C, \tau_\g)$ generated by
$\theta ^\C$ and  $\tau_\g$  acting on  the space   $G ^\C/ U$  of all compact real forms of $\g ^\C$.  This group is finite, since  $\tau_ g  \theta ^ \C  = (\theta ^\C)^{-1} \tau_\g$.  As
E. Cartan proved \cite{Cartan1928}, see also  \cite[Theorem 13.5, chapter I]{Helgason1978} for a modern treatment, any compact group of motions of a simply connected
symmetric space of non-positive curvature has a fixed point.  Is is known that  $G ^ \C/ U$ is a symmetric space
of noncompact type, hence it has  nonpositive curvature, \cite[chapter VI]{Helgason1978}. The fixed point of  $G(\theta ^\C, \tau_\g)$
is the required  compact form. 
\end{proof}

\begin{corollary}\label{rev} A  R-compatible  involution $\tau _\u$ gives an isomorphism between
$Ad_{G_0}$-orbits  in $\g_i$ and $\g_{-i}$. The  $\C$-linear extension $\tau_\u ^ \C (= \tau_\u \circ \tau _\g)$  of $\tau _\u$ gives
an isomorphism  between $Ad_{G_0 ^ \C}$-orbits in $\g_i^\C$ and  $\g_{-i} ^ \C$.
\end{corollary}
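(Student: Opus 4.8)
The plan is to exploit two features of an R-compatible Cartan involution $\tau_\u$: that it is a genuine Lie algebra automorphism of $\g$, and that R-compatibility forces it to reverse the grading. First I would record, from the remark following Definition \ref{Definition.3.2}, that R-compatibility of $\tau_\u$ is equivalent to $\tau_\u(\g_k)=\g_{-k}$ for all $k$; since $\tau_\u^2=Id$, the restriction $\tau_\u|_{\g_i}\colon\g_i\to\g_{-i}$ is then a linear isomorphism, with inverse $\tau_\u|_{\g_{-i}}$. In particular $\tau_\u(\g_0)=\g_0$, so $\tau_\u$ preserves the Lie algebra of $G_0$.

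Next I would transport the $Ad_{G_0}$-action through $\tau_\u$. Because $\tau_\u$ is an automorphism of $\g$, it intertwines the adjoint representation: $\tau_\u\circ\ad_X\circ\tau_\u^{-1}=\ad_{\tau_\u X}$ for every $X\in\g$. Exponentiating, for $X\in\g_0$ and $x\in\g_i$ one gets
$$\tau_\u\bigl(Ad_{\exp X}\,x\bigr)=\exp(\ad_{\tau_\u X})\bigl(\tau_\u x\bigr)=Ad_{\exp(\tau_\u X)}\bigl(\tau_\u x\bigr),$$
and $\tau_\u X\in\g_0$ guarantees $\exp(\tau_\u X)\in G_0$. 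Since $G_0$ is connected, every $g\in G_0$ is a finite product of such exponentials, so iterating the identity gives $\tau_\u\bigl(Ad_{G_0}(x)\bigr)\subseteq Ad_{G_0}(\tau_\u x)$.

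The reverse inclusion then comes for free from involutivity: applying the inclusion just proved to $\tau_\u x\in\g_{-i}$, and then applying $\tau_\u$ once more (using $\tau_\u^2=Id$), yields $Ad_{G_0}(\tau_\u x)\subseteq\tau_\u\bigl(Ad_{G_0}(x)\bigr)$. Hence $\tau_\u$ carries the orbit $Ad_{G_0}(x)$ bijectively onto $Ad_{G_0}(\tau_\u x)$; combined with the linear isomorphism $\g_i\to\g_{-i}$, this is exactly a bijection between the $Ad_{G_0}$-orbit spaces of $\g_i$ and $\g_{-i}$, proving the first assertion. The second assertion is proved verbatim, replacing $\tau_\u$ by its complex-linear extension $\tau_\u^\C=\tau_\u\circ\tau_\g$, a $\C$-linear automorphism of $\g^\C$ with $(\tau_\u^\C)^2=Id$ and $\tau_\u^\C(\g_i^\C)=\g_{-i}^\C$, and replacing $G_0,\g_0$ by $G_0^\C,\g_0^\C$.

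I expect no deep obstacle here; the one point that genuinely uses the hypotheses is the claim that $\tau_\u$ conjugates the $Ad_{G_0}$-action into the $Ad_{G_0}$-action, rather than into the action of some larger group. This is precisely what $\tau_\u(\g_0)=\g_0$ secures, together with $\tau_\u$ being an actual Lie algebra automorphism, via $\exp(\tau_\u X)\in G_0$; the bijectivity on orbit spaces is then a formal consequence of $\tau_\u$ being an involution.
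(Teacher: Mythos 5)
Your proposal is correct and follows essentially the same route as the paper: both arguments rest on $\tau_\u(\g_0)=\g_0$, the intertwining identity $\tau_\u(Ad_{\exp X}x)=Ad_{\exp(\tau_\u X)}(\tau_\u x)$, and involutivity to get equality of orbits. The only cosmetic difference is that the paper lifts $\tau_\u^\C$ to a group automorphism $\hat\tau_\u^\C$ of the simply-connected $G^\C$ fixing $G_0$, whereas you reach the same conclusion by writing elements of the connected group $G_0$ as products of exponentials.
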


\begin{proof} Denote by $\hat \tau_\u ^ \C$ the  involutive automorphism on $ G  ^ \C$ whose differential is $\tau_\u ^ \C$. Since $\tau_\u^\C (\g_0) = \g_0$ and $\tau _\u ^\C  ( \g_0 ^ \C) = \g_0  ^ \C$ we get
$$ \hat \tau_\u ^ \C (G_0 ) = G_0, \quad \hat \tau_\u ^\C ( G_0 ^ \C) = G_0 ^ \C.$$
 For  any $v \in \g _0 ^ \C$
and $e \in \g_{i} ^\C$ we have  $\hat \tau_\u  ^ \C  (\exp v) = \exp (\tau ^\C _\u ( v) )$ and
$$ \tau_\u ^ \C ( Ad_ {\exp v} e) = Ad _{\exp (\tau _\u ^ \C ( v))}(\tau_\u ^\C ( e) ).$$
 Consequently 
 $$\tau _\u ( Ad_{G_0 } e ) =  Ad_{G_0 } ( \tau _u e), \quad  \tau _ \u ^\C ( Ad_{G_0 ^ \C} ( e)) = Ad_{G_0 ^ \C}(\tau _\u  ^ \C (e)).$$ 
 This proves   our corollary. 
\end{proof}

\section{Classification of homogeneous nilpotent elements}

To characterize the  set of orbits  of homogeneous nilpotent elements in a real $\Z_m$-graded  semisimple Lie algebra $\g$
is more complicated than  to characterize the  set of orbits  of nilpotent elements in the  case of complex $\Z_m$-graded  semisimple Lie algebras, since the orbit of  a nilpotent element
$e$ in  $\g$  is not defined uniquely by its characteristic.  If $ m = 1$, i.e. $\g$ is regarded without gradation, a complete  classification of nilpotent elements in $\g$ can be obtained
  using the Cayley transform and the  Vinberg  method of classification of nilpotent elements in  an associated complex $\Z_2$-graded
semisimple Lie algebra, see e.g. \cite{Djokovic1988}. We do not know how to generalize this method for   $m\ge 2$.
Our  method of characterization of the  set of orbits  of homogeneous nilpotent elements   in
a  real $\Z_m$-graded Lie algebra $\g$ is divided in the following steps. In Lemma 4.1 we prove that  there is an injective map from the set of  the $Ad_{G_0}$-conjugacy classes of characteristics in $\g$ to  the set of $Ad_{G_0^\C}$-conjugacy classes of  characteristics in $\g^\C$. Recall that a classification of   characteristics
in $\g ^\C$  can be obtained
by the Vinberg method of support \cite{Vinberg1979}.   In Remark \ref{Remark.5.2}, taking account the Djokovic classification of real forms of a complex $\Z$-graded  semisimple Lie algebra, we summarize these  results in an algorithm  to  classify characteristics in $\g$.
Then we show  in   Theorem \ref{Theorem.5.5} that  there is a 1-1 correspondence
between 
$Ad_{G_0}$-orbits of  nilpotent elements $e\in \g_1$ with a given  characteristic $h$
and the   set of open $\Zz_{G_0} (h)$-orbits  in $\g_1 (\frac{h}{2})$. This set  is closely related to  the set of connected components of a semialgebraic    set  in $\g_1 (\frac{h}{2})$.  
In Remark \ref{alg2} we  explain our  algorithm to count the number of  conjugacy classes of nilpotent elements in $\g_1$  as well
as to  choose a sample  representative for each  conjugacy class.  We note that this algorithm is highly complicated,  so we need  a  sufficient   computer power and a suitable  software package for interesting applications, see Remark \ref{rsa1}. In Example \ref{lin} we demonstrate  our algorithm  in a very simple case with a $\Z_2$-graded Lie algebra
$\g = \ssl_2 (\C)$  regarded as a Lie algebra over $\R$.
\medskip

Let $e$ be a nilpotent element in $\g_1$ and $h\in \g_0$ its characteristic. Then $h$ is also a  characteristic  of $e$
in $\g ^ \C$. A classification of $Ad_{G_0 ^ \C}$-conjugacy classes of characteristics in $\g_0 ^ \C$
can be obtained by using the support method of Vinberg in \cite{Vinberg1979}.  To define a support  of a nilpotent element $e\in \g_1 ^\C$  we choose  a Cartan subspace $\h$  in  the normalizer   $\Nn _{\g_ 0 ^ \C}(e)$  such that $\h \ni h$, where $h \in \g_0 ^\C$ is  a characteristic  of $e$. Let $\phi$ be   the character of $\h$  defined by
$$ [u, e] = \phi (u) (e) \text { for all } u\in \h  \text { and } $$
Set
$$\g(\h,\phi): = \bigoplus_k \g_k(\h, \phi), |\: \g_k(\h,\phi) = \{ x\in \g_{k \mod m}: [u, x] =  k \phi(u) x\: \forall u\in \h\}. $$
It is known that  $\g (\h, \phi)$ is a $\Z$-graded reductive Lie algebra \cite[Lemma 2]{Vinberg1979}.
Recall   that  a complex support $\s^\C(h)$ of $e$  is defined by
$$\s^\C(h) : = \g ' (\h,\phi)   -\text{ the commutant   of  }  \g (\h,\phi).$$
Clearly $\s ^\C (h)$ is defined by  $h$ uniquely up to conjugacy by elements  in $\Nn_{G_0 ^\C} (e)$.
Vinberg proved that  $\s^\C (h)$ is a locally flat $\Z$-graded semisimple Lie algebra in $\g^\C$ whose defining element is half of a characteristic $h$ of $e$ (``locally flat" means  $\dim \s_0 ( h) =\dim \s_1 (h)$)  \cite[\S 4]{Vinberg1979}.
We  define   a  real  support   $\s(h)$ of a nilpotent element $e$ in a real $\Z_m$-graded  semisimple Lie algebra  $\g$ in the same way. Here  we choose  $\h$ to be a  maximal 
$\R$-diagonalizable  Cartan subspace  in $\Nn_{\g_0}(e)$ containing $h$. Such a choice is unique up to a conjugacy by
 elements in $\Nn_{ G_0 } (e)$. Clearly,   the complexification of a real support of $e$ is a complex support of $e$ in 
$\g ^\C$.

It is known that the $Ad_{G_0 ^ \C}$-conjugacy classes of   characteristic elements  $h\in \g_0 ^ \C$
are in a 1-1 correspondence with the $Ad_{G_0 ^ \C}$-conjugacy classes of locally flat $\Z$-graded
semisimple Lie subalgebras $\s( h)$ in $\g^ \C$ \cite{Vinberg1979}. We refer the reader to \cite{Vinberg1979}
and \cite{Djokovic1982} for more details on $\Z$-graded  semisimple Lie algebras and $\Z$-graded locally flat   semisimple Lie algebras over $\C$ or over  $\R$.

\begin{lemma}\label{Lemma.5.1} i) There exists an injective map  from  the set of $Ad_{G_0}$-orbits of  characteristics in $\g$ to
the  set of $G_0 ^\C$-orbits of  characteristics  in $\g ^\C$.\\
ii) Let $h\in \g_0$ be a characteristic  of a nilpotent element in $\g_1$. Then  $Ad_{G_0^\C} (h) \cap \g_0  = Ad _{G_0}(h)$.
\end{lemma}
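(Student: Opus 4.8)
The plan is to derive (i) from (ii) and to concentrate all the work on (ii). For (i), the assignment sending the $Ad_{G_0}$-orbit of a characteristic $h\in\g_0$ to its $Ad_{G_0^\C}$-orbit (viewing $h\in\g_0\subset\g_0^\C$) is well defined because $G_0\subset G_0^\C$. If two characteristics $h_1,h_2\in\g_0$ have the same image, then $h_2\in Ad_{G_0^\C}(h_1)\cap\g_0$, so (ii) applied to $h_1$ gives $h_2\in Ad_{G_0}(h_1)$; this is precisely injectivity. Thus everything reduces to (ii), where the inclusion $Ad_{G_0}(h)\subset Ad_{G_0^\C}(h)\cap\g_0$ is immediate (since $Ad_{G_0}$ preserves $\g_0$ and $G_0\subset G_0^\C$), and the content lies in the reverse inclusion.

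First I would reduce to the split part. By Theorem \ref{Theorem.2.1} a characteristic $h$ is semisimple, and $\ad_h$ has integer eigenvalues by the $\ssl_2$-representation theory of the triple $(h,e,f)$; hence $h$ is hyperbolic. Fix an R-compatible Cartan involution $\tau_\u$ as provided by Theorem \ref{Theorem.3.7}, with $\g=\k\oplus\pg$, and set $\k_0=\k\cap\g_0$, $\pg_0=\pg\cap\g_0$, so that $\g_0=\k_0\oplus\pg_0$ is a Cartan decomposition of $\g_0$. Every hyperbolic element of $\g_0$ is $Ad_{G_0}$-conjugate into $\pg_0$, and, all maximal abelian subspaces of $\pg_0$ being $K_0$-conjugate, I fix one such $\a_0\subset\pg_0$ and move both $h$ and an arbitrary $h'\in Ad_{G_0^\C}(h)\cap\g_0$ by $G_0$ into $\a_0$; here $h'$ is hyperbolic as well, since $\ad_{h'}$ has the same (integer) eigenvalues as $\ad_h$. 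As such $G_0$-moves change neither side of the claimed identity, it suffices to prove: if $h,h'\in\a_0$ are $Ad_{G_0^\C}$-conjugate, then they are $Ad_{G_0}$-conjugate.

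For this I pass to the compact form. Let $\u_0:=\u\cap\g_0^\C$, a compact real form of $\g_0^\C$; then $ih,ih'\in\u_0$, and two elements of a compact form that are conjugate under the complexified group are conjugate under its compact subgroup $U_0$. Hence $h'=Ad_u(h)$ for some $u\in U_0$. Let $\sigma$ denote the conjugation of $G_0^\C$ with respect to $G_0$ (with differential $\tau_\g$); it fixes $h$ and $h'$ and preserves $\u_0$ and $U_0$, since $\u_0$ is compatible with $\g_0$. The element $c:=u^{-1}\sigma(u)$ then lies in $\Zz_{U_0}(h)$ and satisfies $\sigma(c)=c^{-1}$, so that it is of the Cartan-embedding form $m\,\sigma(m)^{-1}$ with $m=u^{-1}$. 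Trivializing this cocycle inside the centralizer, one obtains $c=\exp X$ with $X\in\Zz_{\u_0}(h)$ and $\sigma(X)=-X$; setting $g_0:=u\exp(X/2)$ one checks $\sigma(g_0)=g_0$, i.e. $g_0\in G_0$, while $\exp(X/2)\in\Zz_{G_0^\C}(h)$ gives $Ad_{g_0}(h)=Ad_u(h)=h'$. This establishes the reverse inclusion.

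The main obstacle is this last descent step, where a $G_0^\C$-conjugacy is converted into a $G_0$-conjugacy: it is exactly where complex and real conjugacy can differ for general semisimple elements. What makes it go through is the reduction to $\a_0\subset\pg_0$, which forces the obstruction cocycle into the compact form $\Zz_{U_0}(h)$ of the reductive centralizer, together with the compatibility of $\u$ with the real structure granted by Theorem \ref{Theorem.3.7}. The delicate point is to trivialize the cocycle \emph{within} $\Zz_{G_0^\C}(h)$ rather than merely within $G_0^\C$; here connectedness of $\Zz_{G_0^\C}(h)$, available from Steinberg's theorem in the simply connected setting fixed in the paper, guarantees that $c$ lies in the identity component of $\{x:\sigma(x)=x^{-1}\}$ and is therefore an exponential of an element of $\Zz_{\u_0}(h)$ anti-fixed by $\sigma$.
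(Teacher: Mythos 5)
Your reduction coincides with the paper's up to the point where the real work begins: like the paper, you use the R-compatible Cartan involution of Theorem \ref{Theorem.3.7} to conjugate the characteristics into the vector part of a Cartan decomposition of $\g_0$ (the paper into $\Zz(\g_0)\oplus\pg_0'$, you into a maximal abelian subspace $\a_0\subset\pg_0$), and the bookkeeping between parts (i) and (ii) is harmless in either direction. Everything then rests on the claim that two hyperbolic elements of $\pg_0$ which are $Ad_{G_0^\C}$-conjugate are already $Ad_{G_0}$-conjugate. At exactly this point the paper cites Theorem 2.1 of \cite{Rothschild1972}, which \emph{is} that statement; you instead attempt a self-contained Galois-descent argument, and that is where your proof has a genuine gap.

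The gap is the trivialization of the cocycle $c=u^{-1}\sigma(u)\in\Zz_{U_0}(h)$. Writing a prospective real conjugator as $g=un$ with $n\in\Zz_{G_0^\C}(h)$ and imposing $\sigma(g)=g$ shows that $h'$ is $Ad_{G_0}$-conjugate to $h$ \emph{if and only if} $c=n\,\sigma(n)^{-1}$ for some $n\in\Zz_{G_0^\C}(h)$, i.e.\ if and only if the class of $c$ in $H^1(\sigma,\Zz_{G_0^\C}(h))$ is trivial. So the step you describe as ``trivializing this cocycle inside the centralizer'' is not a routine manipulation: it is logically equivalent to the assertion being proved. The justification you offer --- connectedness of $\Zz_{G_0^\C}(h)$ forces $c$ into the identity component of $\{x:\sigma(x)=x^{-1}\}$, hence $c=\exp X$ with $\sigma(X)=-X$ --- does not follow. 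Connectedness of a group does not make its $\sigma$-anti-fixed subset connected: if $\Zz_{U_0}(h)$ contains a circle $T\cong U(1)$ lying in $G_0$ (so $\sigma$ acts trivially on it, as happens for the compact part $M_0=\Zz_{K_0}(\a_0)$ of the centralizer), then $\{x\in T:\sigma(x)=x^{-1}\}=\{\pm 1\}$ is disconnected, and $-1$ is anti-fixed but is not $\exp X$ for any $X$ with $\sigma(X)=-X$, since the $(-1)$-eigenspace of $\sigma$ on the Lie algebra of $T$ is zero. Nothing in your argument places $c$ in the image of the Cartan embedding $n\mapsto n\,\sigma(n)^{-1}$ of the centralizer, and the relevant cohomology $H^1(\sigma,T)\cong(\Z/2)^k$ of a maximal torus of the centralizer is typically nonzero. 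What actually makes the descent work for $h\in\a_0\subset\pg_0$ is the little-Weyl-group fact underlying Rothschild's theorem (elements of $\a_0$ conjugate under the complex Weyl group are already conjugate under $N_{K_0}(\a_0)$); you must either prove that separately or cite \cite{Rothschild1972} as the paper does.
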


\begin{proof}  i) First we note that if $h\in \g$ is a  characteristic element  then it is  also a characteristic element in $\g ^\C$. 
Thus we have a map from the conjugacy classes of  characteristics in $\g$ to the conjugacy classes of characteristics in $\g ^\C$.
We  will show that this map is  injective.  Suppose  that $h_1,h_2\in \g_0$ are   characteristics  in $\g$
such that $Ad _X h_1 = h_2$  for $X \in G_0 ^\C$. Let $\tau_\u$ be a R-compatible Cartan involution in Theorem \ref{Theorem.3.7}. Note that the  restriction of $\tau_\u$ to $\g_0$ leaves the center  of $\g_0$ as well as the commutant $\g_0'$ of $\g_0$  invariant. Moreover the restriction of $\tau_\u$ to $\g_0 '$ is also a Cartan involution of $\g_0 '$. By the theory of Cartan subalgebras  in   real reductive Lie algebras, see. e.g. \cite[chapter IX, Corollary 4.2]{Helgason1978} we can assume that $h_1, h_2 \in  \Zz (\g_0) \oplus  \pg_0 '$, where  $\g_0' = \k_0 ' \oplus \pg_0 '$ is the  Cartan decomposition of $\g_0'$ with respect to $\tau_\u$.   By   \cite[Theorem 2.1]{Rothschild1972}, which asserts that  two semisimple elements in $\pg_0'$ are $G_0 ^\C$-conjugate if and only if they are  $G_0$-conjugate,
there exists $Y \in G_0 $ such that $Ad_Y h _1  = h_2$, since $h_1$ and $h_2$ are $G_0 ^\C$-conjugate.

ii) Clearly   Lemma \ref{Lemma.5.1}.ii is a consequence of Lemma \ref{Lemma.5.1}.i.

\end{proof}

\begin{remark}\label{Remark.5.2} Using  Lemma \ref{Lemma.5.1} we  obtain    a classification of conjugacy classes of characteristics
in $\g$ as follows. First we find   all complex  supports  in $\g^\C$  by  Vinberg method in \cite{Vinberg1979}.
There are only a finite number of them. Next, we find  the real forms  of  these  complex supports  using the  Djokovic classification
of real forms of complex  $\Z$-graded   semisimple Lie algebras  in \cite{Djokovic1982}.  In the third step we decide which
real  form of  a given complex support admits an embedding to $\g$  whose complexification is the given complex
support. This  step  can be done  using the theory of  representations of  real semisimple   Lie algebras, see e.g.
\cite{Iwahori1959}, \cite{Vinberg1988}.  
  Lemma \ref{Lemma.5.1} shows that in the third step
there exists  not more than one  real  form for each given  complex support. The  defining element of the corresponding
real support is   half of our desired characteristic.
\end{remark}

Now let us fix a characteristic  $h \in \g_0$ corresponding to a nilpotent element $e \in \g_1$.
Let us consider the following $\Z$-graded algebra
$$\g(\frac{h}{2}): = \bigoplus_k \g_k(\frac{h}{2}), |: \g_k(\frac{h}{2}) = \{ x\in \g_{k \mod m}: [\frac{h}{2}, x] =  k x\}.$$
 Clearly the centralizer $\Zz_{G_0} (h)$ of $h$ in $G_0$ acts
on $\g(\frac{h}{2})$ preserving  the $\Z$-gradation. The Lie  algebra of $\Zz_{G_0} (h)$  is
$\g_0 (\frac{h}{2})$. 
It is known \cite[proof of Theorem 1 (4)]{Vinberg1979} that $e \in \g_1(\frac{h}{2})$, moreover $[\g_0(\frac {h}{2}), e] = \g_1(\frac{h}{2})$.
Equivalently, $e$ belongs to an open  $Ad_{\Zz_{G_0} (h)}$-orbit    in $\g _1(\frac{h}{2})$.
An element $e\in \g_1$ (resp. $\g_1^\C$) is called {\it generic}, if   orbit $Ad_{\Zz_{G_0} (h)}(e)$  is open in $\g_1$, 
(resp. $Ad_{\Zz_{G_0 ^\C}(h)} (e)$ is open in $\g_1 ^\C$). Otherwise $e$ is called
{\it singular}.  By the definition  the genericity of an element $e\in \g_1$ implies the genericity of
any  element in the  orbit $Ad_{\Zz_{G_0 ^\C}(h)}(e)$.
The following Theorem \ref{Theorem.5.5} generalizes   a theorem  \cite[Theorem 6.1]{Djokovic1983}   due to Djokovic.

\begin{theorem}\label{Theorem.5.5} Let $(h, e, f)$ be a  $\ssl_2$-triple.  The inclusion
$\g_1(\frac{h}{2}) \to \g_1$ induces a bijection between  the open $Ad_{\Zz_{G_0} (h)}$-orbits
in $\g_1 (\frac{h}{2})$ and  the $Ad_{G_0}$-orbits contained in $Ad_{G_0 ^\C }(e) \cap \g_1$. 
\end{theorem}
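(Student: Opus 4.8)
The plan is to construct the bijection explicitly and verify it in three stages. To an open $Ad_{\Zz_{G_0}(h)}$-orbit $\Oo \subset \g_1(\frac{h}{2})$ I assign the $Ad_{G_0}$-orbit $Ad_{G_0}(e')$ of an arbitrary representative $e' \in \Oo$; the content of the theorem is that this assignment is well defined with image inside $Ad_{G_0^\C}(e)\cap\g_1$, injective, and surjective. The crucial preliminary fact, on which all three stages rest, is that every $e'$ lying in an open $Ad_{\Zz_{G_0}(h)}$-orbit of $\g_1(\frac{h}{2})$ is $Ad_{G_0^\C}$-conjugate to $e$ and admits $h$ itself as a real characteristic.

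To prove this key fact I would first observe that openness of the real orbit means $[\g_0(\frac{h}{2}),e']=\g_1(\frac{h}{2})$; complexifying this surjection gives $[\g_0^\C(\frac{h}{2}),e']=\g_1^\C(\frac{h}{2})$, so the orbit $Ad_{\Zz_{G_0^\C}(h)}(e')$ is open in $\g_1^\C(\frac{h}{2})$. By Vinberg's uniqueness over $\C$ \cite[Theorem 1(4)]{Vinberg1979} there is only one such open orbit, namely the one through $e$, whence $e'=Ad_g(e)$ for some $g\in\Zz_{G_0^\C}(h)$. Then $(h,e',f'')$ with $f'':=Ad_g(f)\in\g_{-1}^\C(\frac{h}{2})$ is an $\ssl_2$-triple in $\g^\C$. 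Applying the conjugation $\tau_\g$ and using that $h$ and $e'$ are real, $(h,e',\tau_\g f'')$ is again an $\ssl_2$-triple; the uniqueness of the third element given $(h,e')$, i.e. the complex analogue of Theorem \ref{Theorem.2.1}.iii \cite[Theorem 1(3)]{Vinberg1979}, forces $\tau_\g f''=f''$, so $f''\in\g_{-1}$ and $(h,e',f'')$ is a real $\ssl_2$-triple. Hence $h$ is a real characteristic of $e'$ and $e'\in Ad_{G_0^\C}(e)\cap\g_1$, giving well-definedness (note also that every element of $\g_1(\frac{h}{2})$ is nilpotent by Lemma \ref{Lemma.2.3}, since $[h,x]=2x$ drives $x$ to $0$ under $Ad_{\exp(th/2)}$).

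For injectivity I would take representatives $e_1\in\Oo_1$, $e_2\in\Oo_2$ with $e_2=Ad_g(e_1)$, $g\in G_0$. By the key fact $h$ is a characteristic of both $e_1$ and $e_2$, and transporting the triple of $e_1$ by $g$ shows $Ad_g(h)$ is also a characteristic of $e_2$. By the uniqueness in Theorem \ref{Theorem.2.1}.ii there is $z\in\Zz_{G_0}(e_2)$ with $Ad_{zg}(h)=h$, so $zg\in\Zz_{G_0}(h)$ and $Ad_{zg}(e_1)=e_2$; thus $e_2\in\Oo_1$ and $\Oo_1=\Oo_2$. For surjectivity I would start from an orbit inside $Ad_{G_0^\C}(e)\cap\g_1$ with representative $e'$, which is nilpotent, and produce a real characteristic $h'$ by Theorem \ref{Theorem.2.1}.i. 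Since $e'=Ad_X(e)$ with $X\in G_0^\C$, the element $Ad_X(h)$ is a complex characteristic of $e'$, so by complex uniqueness $h'$ is $Ad_{G_0^\C}$-conjugate to $h$; Lemma \ref{Lemma.5.1}.ii then makes them $Ad_{G_0}$-conjugate. Replacing $e'$ by a suitable $Ad_{G_0}$-translate I may assume $h$ is a characteristic of $e'$, so $e'\in\g_1(\frac{h}{2})$; the relation $[\g_0(\frac{h}{2}),e']=\g_1(\frac{h}{2})$ quoted from \cite{Vinberg1979} shows $e'$ is generic, hence lies in an open orbit mapping to the given one.

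The main obstacle is the key fact of the second paragraph: transferring information between the real open orbit in $\g_1(\frac{h}{2})$ and the complex picture. The delicate points are that the real open orbit is forced into the unique complex open orbit of $e$, which rests on complexifying the surjectivity of $\ad_{e'}$ together with Vinberg's complex uniqueness, and that the resulting complex $\ssl_2$-triple can be promoted to a genuinely real one via the $\tau_\g$-symmetry and the uniqueness of $f$. The descent of the conjugacy of characteristics from $G_0^\C$ to $G_0$, which is exactly Lemma \ref{Lemma.5.1}.ii, is what keeps the whole argument inside the real group.
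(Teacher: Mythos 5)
Your proposal is correct and follows essentially the same route as the paper: the well-definedness and the real $\ssl_2$-triple with the same $h$ (your ``key fact'') is the paper's Lemma \ref{Lemma.5.6} together with Vinberg's unique-open-orbit remark, surjectivity descends the conjugacy of characteristics via Lemma \ref{Lemma.5.1}.ii exactly as in the paper, and injectivity uses Theorem \ref{Theorem.2.1}.ii in the same way. Your explicit $\tau_\g$-fixed-point argument for the reality of the third element of the triple is a welcome expansion of a step the paper leaves terse.
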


\begin{proof} Suppose that $Ad_{\Zz_{G_0} (h)}(e')$ is an open orbit in $\g_1(\frac{h}{2})$.  then $e$ and $e'$ are generic elements in $\g-1 ^\C$, hence  $e'$ belongs to the  orbit $Ad_{\Zz_{G_0^\C}(h)  }(e)$ in $\g_1 ^\C$, (that is a remark due to Vinberg  in \cite[proof of Theorem 1(4)]{Vinberg1979}.  This  defines a map
from the set of  open $Ad_{\Zz_{G_0} (h)}$-orbits
in $\g_1 (\frac{h}{2})$ to the set of   $Ad_{G_0}$-orbits contained in $Ad_{G_0 ^\C }(e) \cap \g_1$.

We will show that this map is surjective.    Let $e'\in Ad_{G_0 ^\C} (e)\cap \g_1$. Let $h'\in \g_0$ be  a characteristic
of $e$. By the JMV theorem for the complex case, $h$ and $h'$ belong to  the same  $Ad_{G_0 ^\C}$-orbit.
Lemma \ref{Lemma.5.1}.ii implies that
there exists $X \in G_0 $ such that $Ad_X( h')  = h$.  Clearly $Ad_Xe'\in \g_1 (\frac{h}{2})$, since $[Ad_X (h'), Ad_X ( e') ]= Ad _X ( e')$. Element $Ad_X e' $ is generic  in $\g_1(\frac{h}{2})$, since it lies in the  orbit $Ad_{\Zz_{G_0^\C}(h)}(e)$.    This proves the surjectivity of the considered map.

It remains to show that this map is injective.   First we will prove the following

\begin{lemma}\label{Lemma.5.6} (cf. Lemma 6.4 in \cite{Djokovic1983}) {\it Let 
$e'$ be a generic element in $\g_1 (\frac{h}{2})$. Then  there exists $f'\in \g_{-1}(\frac{h}{2})$ such that $(h,e',f')$ is an $\ssl_2$-triple.}
\end{lemma}

\begin{proof}  Let $e$ and $e'$ be nilpotent elements satisfying the condition of Lemma \ref{Lemma.5.6}. Then $e$ and $e'$ are generic elements in $\g_1^\C$. 
 By a Vinberg remark in \cite[proof of Theorem 1.4]{Vinberg1979} there is an element $Y \in \Zz _{ G_0 ^\C} (h)$
such that $Ad_ Y ( e)  = e'$.  Clearly $(h, e', Ad_Y  (f))$ is a $\ssl_2 ^\C$-triple in $\g_1^\C $, moreover
$Ad_Y(f) \in \g_{-1}^\C(\frac{h}{2})$, since $f \in \g_{-1}(\frac{h}{2})$.  Since $h$ and $e'$ define
their $\ssl_2$-triple uniquely by Theorem \ref{Theorem.2.1}.iii, we get $Ad_Y (f) \in \g_{-1}^\C(\frac{h}{2})\cap \g_{-1} = \g_{-1}(\frac{h}{2})$.
\end{proof}

Let us complete the proof of  Theorem \ref{Theorem.5.5}. Suppose that $e$ and $e'$ are  generic elements
of $\g_1 (\frac{h}{2})$ such that $e'= Ad_X e$ for some $X\in G_0$. We will  show  that $e$ and $e'$ are in the same
open orbit of $\Zz_{G_0} (h)$. By Lemma \ref{Lemma.5.6}  there are elements $f$ and $f'$ in $\g_{-1}(\frac{h}{2})$ such that  $(h, e, f)$ and $(h, e', f')$
are $\ssl_2$-triples in $\g$. Note that $(Ad_ X h, e', Ad_X f)$ is a  $\ssl_2$-triple in $\g$.
By Theorem \ref{Theorem.2.1}.ii there exists an element $Y \in G_0$ such that $Ad_Y  (e') = e'$,
$Ad _Y ( Ad _X h) = h $ and $Ad_Y ( Ad _X f)  = f'$.  Thus $e' = Ad_{Y \cdot X } e$, where $Y\cdot X \in \Zz_{G_0} (h)$.  This proves the injectivity  of  our map.
\end{proof}

Now we proceed to   classify the open $\Zz_{G_0} (h)$-orbits
in $\g_1 (\frac{h}{2})$.

 Denote by $\g_i( \frac{h}{2})'$ the  $i$-th  component  of the commutant  of $\g(\frac{h}{2})$ which has
 the induced $\Z$-gradation from $\g(\frac{h}{2})$.  Since
$\g_1(\frac{h}{2}) = [\g_0 (\frac{h}{2}), \g_1(\frac{h}{2})]$,  we get
\begin{equation}
 \g_1(\frac{h}{2}) ' = \g_1 ( \frac{h}{2}).
 \label{g1}
\end{equation}
Since  $\Zz (\g (\frac{h}{2})) \subset \g_0 ( \frac {h }{2})$, we have $ \g_0 ( \frac {h}{2} ) = \Zz(\g(\frac{h}{2}))
\oplus \g_0 (\frac{h}{2}) '$. Hence
\begin{equation}
[\g_0(\frac{h}{2})' , g_1(\frac{h}{2})  ] =  g_1(\frac{h}{2}). 
\label{g01}
\end{equation}
Denote by $\Zz_{ G_0}(h)'$ the connected subgroup in $G_0$ whose Lie algebra is $\g_0(\frac{h}{2})'$.
An element $ e_i\in \g_i (\frac{h}{2})'$ is called {\it generic}, if the orbit
$Ad_{\Zz_{G_0}(h)'}(e_i)$ is open in $\g_i (\frac{h}{2})$. Equivalently, $ [\g_0 (\frac{h}{2})', e_i] = \g_i (\frac{h}{2})$. 

Let $\Zz_{G_0} (h) ^0$ be  the identity connected component of $\Zz_{G_0} (h)$.  From (\ref{g1}) and (\ref{g01}) we get immediately

\begin{lemma}\label{red} There exists a 1-1 correspondence between   the set of open   $Ad_{\Zz_{G_0} (h)^0}$-orbits  in $\g_1 (\frac{h}{2})$
and  the set of open $Ad_{\Zz_{G_0}(h)'}$-orbits in $\g_1  (\frac{h}{2})' = \g_1 (\frac{h}{2})$. 
\end{lemma}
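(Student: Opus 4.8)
The plan is to show that on the subspace $\g_1(\frac{h}{2})$ the two groups $\Zz_{G_0}(h)^0$ and $\Zz_{G_0}(h)'$ have literally the \emph{same} orbits, so that the asserted bijection is just the identity on one common collection of orbits and ``open for one'' coincides with ``open for the other''. The starting point is that the Lie algebra of $\Zz_{G_0}(h)^0$ is $\g_0(\frac{h}{2})$, together with the reductive splitting $\g_0(\frac{h}{2}) = \Zz(\g(\frac{h}{2})) \oplus \g_0(\frac{h}{2})'$ recorded above, in which the first summand is central in all of $\g(\frac{h}{2})$.

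First I would lift this splitting to the group. Since $\Zz(\g(\frac{h}{2}))$ is a central ideal of $\g_0(\frac{h}{2})$, the two summands commute, so for $a \in \Zz(\g(\frac{h}{2}))$ and $b \in \g_0(\frac{h}{2})'$ one has $\exp(a+b) = \exp(a)\exp(b)$; as $\Zz_{G_0}(h)^0$ is connected and hence generated by exponentials of its Lie algebra, this yields $\Zz_{G_0}(h)^0 = Z \cdot \Zz_{G_0}(h)'$, where $Z := \exp(\Zz(\g(\frac{h}{2})))$ is central in $\Zz_{G_0}(h)^0$. It is precisely to secure this factorization that one works with the identity component rather than with the full centralizer $\Zz_{G_0}(h)$.

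The key observation is then that $Z$ acts trivially on $\g_1(\frac{h}{2})$: any $c \in \Zz(\g(\frac{h}{2}))$ satisfies $ad_c|_{\g(\frac{h}{2})} = 0$, and since $\g_1(\frac{h}{2}) \subset \g(\frac{h}{2})$ we get $Ad_{\exp c}|_{\g_1(\frac{h}{2})} = \exp(ad_c)|_{\g_1(\frac{h}{2})} = Id$. Writing a general element of $\Zz_{G_0}(h)^0$ as $z g'$ with $z \in Z$ and $g' \in \Zz_{G_0}(h)'$, and noting that $g'$ centralizes $h$ and hence preserves the grading, so that $Ad_{g'}(x) \in \g_1(\frac{h}{2})$ for $x \in \g_1(\frac{h}{2})$, I obtain $Ad_{zg'}(x) = Ad_z\, Ad_{g'}(x) = Ad_{g'}(x)$. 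Thus $Ad_{\Zz_{G_0}(h)^0}(x) = Ad_{\Zz_{G_0}(h)'}(x)$ for every $x$, i.e. the two groups partition $\g_1(\frac{h}{2})$ into identical orbits. By (\ref{g1}) the space $\g_1(\frac{h}{2})'$ on which $\Zz_{G_0}(h)'$ naturally acts is exactly $\g_1(\frac{h}{2})$, so both orbit collections live in the same space, their open members coincide, and the identity map is the required bijection.

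As a cross-check on genericity, which is where (\ref{g01}) enters, I would compute orbit tangent spaces: at $e \in \g_1(\frac{h}{2})$ the tangent space to the $\Zz_{G_0}(h)^0$-orbit is $[\g_0(\frac{h}{2}), e]$, and by centrality $[\Zz(\g(\frac{h}{2})), e] = 0$, so this equals $[\g_0(\frac{h}{2})', e]$, the tangent space to the $\Zz_{G_0}(h)'$-orbit; by (\ref{g01}) the common genericity condition reads $[\g_0(\frac{h}{2})', e] = \g_1(\frac{h}{2})$. I do not anticipate any serious difficulty here; the only step requiring care is the passage from the Lie-algebra direct sum to the group factorization $\Zz_{G_0}(h)^0 = Z \cdot \Zz_{G_0}(h)'$, and once the central factor $Z$ is seen to act trivially the bijection is immediate, matching the ``we get immediately'' preceding the statement.
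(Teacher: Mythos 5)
Your proof is correct and supplies exactly the argument the paper leaves implicit (the lemma is stated with nothing beyond ``From (\ref{g1}) and (\ref{g01}) we get immediately''): since $\Zz(\g(\frac{h}{2}))$ is central in $\g(\frac{h}{2})$ it acts trivially on $\g_1(\frac{h}{2})$, so after the factorization $\Zz_{G_0}(h)^0 = \exp(\Zz(\g(\frac{h}{2})))\cdot \Zz_{G_0}(h)'$ the two groups have literally the same orbits there, and the bijection is the identity. Your justification of that factorization via connectedness and centrality, and the tangent-space cross-check using $[\Zz(\g(\frac{h}{2})), e]=0$, are both sound.
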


\begin{remark}\label{nil} Clearly, all elements in $\g_i ^\C (\frac{h}{2})'$ are  nilpotent, if $ i \not = 0$.
Proposition 2 in \cite{Vinberg1976} asserts that there is only  a finite number  of $\Zz_{G_0^\C}(h)'$-conjugacy  classes
of  nilpotent elements in $\g_i^\C (\frac{h}{2})'$. Hence it follows that the  set of  generic nilpotent elements
in $\g_i ^ \C (\frac{h}{2})$ is open and dense in $\g_i^\C(\frac{h}{2}) '$. Since the number  of  $Ad_{\Zz_{G_0} ( h)} '$-orbits in  a $Ad_{\Zz_{G_0^\C}(h)'}$-orbit
 is  finite \cite{BCH1962}, Proposition 2.3, it follows that
for any  $i \not = 0$ the  set of  generic elements   in $\g_i(\frac{h}{2})' $ is  open and dense.
\end{remark}

Let us analyze the set of  open $Ad_{\Zz_{G_0}(h)'}$-orbits in $\g_1$.
Recall that an element $e$ in $\g_1(\frac{h}{2})$ (resp. in $\g_1^\C(\frac{h}{2})$)  is called {\it singular},   if it is not generic.
Equivalently
\begin{equation}
\dim [\g_0 (\frac{h}{2})',  e] \le \dim \g_1(\frac{h}{2}) -1.
\label{dim}
\end{equation}

Let $f_1, \cdots, f_m$ be a basis in $\g_0 (\frac{h}{2})'$.
Let us choose an basis  $e_1, \cdots, e_n$ in $\g_1 $. We write
$e = \sum _j  a_j ( e) e_j, \:  a_j \in \R$.
Then $[e, f_i] = \sum a_j(e) [ e_j, f_i] =\sum_{j, k} a_j(e) c_{ij} ^ k f_k$. Set $b_{ik} ( e): = \sum_j a_j (e)  c_{ij}  ^ k$.
Note that $e$ is singular, if and only if the matrix $(b_{ij} (e))_{i = 1, m} ^{j = 1, n}$ has
rank less than or equal to $n -1$. Note that $m \ge n$. 
Denote by $P_l$, $l = 1, \binom{n}{m},$ the sub-determinants of  $(b_{ij})$.
Clearly $e$ is singular, if and  only if $P_l ( e) = 0$ for  all $l$.  

\begin{lemma}\label{rsa} There is a 1-1 correspondence between  the set of open $Ad_{\Zz_{G_0}(h)^0}$-orbits in $\g_1(\frac{h}{2})$
and the  set of connected components of the  semialgebraic set  $\{ x\in \g_1(\frac{h}{2})|\: \sum _{l=1}^{\binom{n}{m}} P_l^ 2 (x) > 0\}$. The number
of open $Ad_{\Zz_{G_0}(h)^0}$-orbits in $\g_1(\frac{h}{2})$ is finite.
\end{lemma}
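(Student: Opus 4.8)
The plan is to identify the semialgebraic set appearing in the statement with the set of generic elements and then to run a connectedness argument. First I would reduce, via Lemma \ref{red}, from the group $\Zz_{G_0}(h)^0$ to its subgroup $\Zz_{G_0}(h)'$: that lemma gives a bijection between open $Ad_{\Zz_{G_0}(h)^0}$-orbits and open $Ad_{\Zz_{G_0}(h)'}$-orbits in $\g_1(\frac{h}{2})$, so it suffices to analyze the latter. For $\Zz_{G_0}(h)'$ the tangent space to the orbit through $e=\sum_j a_j(e)e_j$ is $[\g_0(\frac{h}{2})',e]$, which is the image of the linear map $\g_0(\frac{h}{2})'\to\g_1(\frac{h}{2})$, $f\mapsto[e,f]$, encoded by the matrix $(b_{ij}(e))$. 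By condition (\ref{dim}), $e$ is generic (its orbit open) exactly when this matrix has full rank $n$, i.e. exactly when some maximal minor $P_l(e)$ is nonzero. Since each $P_l$ is a polynomial in the coordinates $a_j$, the set $\Uu:=\{x\in\g_1(\frac{h}{2})\mid\sum_l P_l^2(x)>0\}$ is open and semialgebraic and coincides precisely with the set of generic elements. Because $\Zz_{G_0}(h)'$ is connected, each of its orbits is connected, and the open orbits are exactly those contained in $\Uu$; thus $\Uu$ is partitioned into open orbits.

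The natural map sends each open orbit to its underlying subset of $\Uu$, and I claim this is the asserted bijection onto the set of connected components. The key point is that each open orbit $O$ is also \emph{closed} in $\Uu$: since $\Uu$ is the disjoint union of the open orbits, the complement $\Uu\setminus O$ is a union of open orbits, hence open. Therefore $O$ is a nonempty connected clopen subset of $\Uu$; intersecting $O$ with the connected component $C$ containing it shows $O=O\cap C$ is clopen and nonempty in the connected set $C$, so $O=C$ and $O$ is a connected component. Conversely, $\Uu$ is open in $\g_1(\frac{h}{2})\cong\R^n$ and hence locally connected, so every connected component $C$ is open; writing $C$ as the disjoint union of the open orbits it meets (each such orbit lies entirely in $C$ since orbits are connected) and using that each of these is clopen in $C$, connectedness of $C$ forces $C$ to consist of a single orbit. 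This yields the 1-1 correspondence.

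Finiteness of the number of open $Ad_{\Zz_{G_0}(h)^0}$-orbits then follows immediately from finiteness of the number of connected components of $\Uu$, which holds because $\Uu$ is a semialgebraic subset of $\R^n$ and every semialgebraic set has only finitely many connected components (a standard consequence of cell decomposition in real algebraic geometry); alternatively one may invoke Remark \ref{nil}. The step requiring genuine care is the clopen argument, namely verifying that each open orbit is closed in $\Uu$. This rests entirely on the fact, built into the definition of $\Uu$, that \emph{every} point of $\Uu$ already lies in an open orbit, so that $\Uu$ is exhausted by the open orbits with no generic points left over; once this exhaustion is in hand, the remainder is point-set topology combined with the semialgebraic finiteness theorem.
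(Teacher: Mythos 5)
Your proposal is correct and follows essentially the same route as the paper: the paper's own proof simply invokes Lemma \ref{red} together with the preceding discussion identifying the semialgebraic set with the set of generic ($=$ open-orbit) points, and deduces finiteness from the first assertion exactly as you do. Your write-up merely makes explicit the clopen/connected-component argument and the semialgebraic finiteness theorem that the paper leaves implicit.
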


\begin{proof} The first assertion  follows from Lemma \ref{red} and our consideration above. The second assertion follows from the first one.
\end{proof}

\begin{remark} \label{rsa1} In \cite[chapter 16, Theorem 16.14]{Basu} the authors  offer  an algorithm to compute the number of  the connected components  of a semisalgebraic set and produce sample  representative for   each connected component. Their algorithm also allows to recognize, whether  given two points in a semialgebraic set belong to the same connected component of this set. This algorithm is highly
complicated and  we hope to  implement it   in  future  using  an appropriate software package.
\end{remark}

It remains to  consider  whether two given open  connected $Ad_{\Zz_{G_0}(h)^0}$-orbits in $\g_1(\frac{h}{2})'$ belong  to the same 
$Ad_{\Zz_{G_0}(h)}$-orbit in $\g_1 (\frac{h}{2})$.  Let $e_i$, $i= \overline{1, M}$, be representatives of the   connected open $Ad_{\Zz_{G_0}(h)^0}$-orbits
in $\g_1(\frac{h}{2})$ obtained by  the algorithm in \cite{Basu}, see Remark \ref{rsa1}.  Since  the  group $\Zz_{Ad_{G_0}}(h)$ is connected  \cite[Lemma 5]{Kostant1963},   the group $Ad_{\Zz_{G_0} (h)}$ is generated by $Ad_{\Zz_{G_0}(h)^ 0}$ and  the  subgroup $Ad_{\Zz (G_0)}$ acting $\g_1$.  Denote by
$F(e_k)$ the set of all elements  $X \in \Zz(G_0)$  such that $Ad_X (e_k)$ belongs to the orbit  $Ad_{\Zz_{G_0}(h)^0}(e_k)$. Clearly $F(e_k)$ is  a subgroup of
$\Zz(G_0)$.

\begin{lemma}\label{alg1} The quotient $\Zz (G_0)/F (e_k)$ is a finite  abelian group. There exists an algorithm to find  representatives  $Y_{ k,i}, i = \overline{1, N}$, of
the  coset $\Zz (G_0)/F (e_k)$ in $\Zz(G_0)$.
The orbit $Ad_{\Zz_{G_0} (h)}(e_k)$  is a  disjoint union of $N$ open connected orbits  $Ad_{\Zz_{G_0}(h) ^ 0}   (Y_{k,i} (e_k))$, $i = \overline{1, N}$.
\end{lemma}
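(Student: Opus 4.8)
The plan is to reduce the whole statement to the finiteness of the set of open $Ad_{\Zz_{G_0}(h)^0}$-orbits in $\g_1(\frac{h}{2})$ furnished by Lemma \ref{rsa}, exploiting throughout that $\Zz(G_0)$ lies in the centre of $G_0$. I will freely use the generation statement established just before the lemma, namely that, as transformation groups of $\g_1$, one has $Ad_{\Zz_{G_0}(h)} = Ad_{\Zz_{G_0}(h)^0}\cdot Ad_{\Zz(G_0)}$, which follows from the connectedness of $\Zz_{Ad_{G_0}}(h)$ \cite[Lemma 5]{Kostant1963} together with $\Zz(G_0)\subseteq\Zz_{G_0}(h)$.

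First I would prove that $\Zz(G_0)/F(e_k)$ is finite abelian. Abelianness is immediate since $\Zz(G_0)$ is abelian. For finiteness, note that every $X\in\Zz(G_0)$ fixes $h$ under the adjoint action (central elements act trivially on $\g_0$), hence $Ad_X$ preserves $\g_1(\frac{h}{2})$ and, commuting with $Ad_{\Zz_{G_0}(h)^0}$, carries open orbits to open orbits. Thus $\Zz(G_0)$ acts on the finite set of open $Ad_{\Zz_{G_0}(h)^0}$-orbits in $\g_1(\frac{h}{2})$, and by the very definition of $F(e_k)$ the stabiliser of the orbit $Ad_{\Zz_{G_0}(h)^0}(e_k)$ is exactly $F(e_k)$. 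The orbit--stabiliser correspondence then yields an injection
$$\Zz(G_0)/F(e_k)\;\hookrightarrow\;\{\text{open } Ad_{\Zz_{G_0}(h)^0}\text{-orbits in }\g_1(\tfrac{h}{2})\},$$
whose target is finite by Lemma \ref{rsa}; hence $\Zz(G_0)/F(e_k)$ is finite.

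Next I would address the algorithm. Since $\Zz(G_0)^0$ is connected and contained in $\Zz_{G_0}(h)$, it lies in $\Zz_{G_0}(h)^0$ and therefore in $F(e_k)$; consequently $\Zz(G_0)/F(e_k)$ is a quotient of the finite component group $\pi_0(\Zz(G_0))$. The algorithm then enumerates representatives of $\pi_0(\Zz(G_0))$, computes $Ad_X(e_k)\in\g_1(\frac{h}{2})$ for each, and invokes the procedure of \cite{Basu} recalled in Remark \ref{rsa1} --- whose same-component test decides whether two points of the generic semialgebraic set lie in one open orbit --- to sort these elements into the cosets of $F(e_k)$. Choosing one $X$ per coset produces the desired $Y_{k,1},\dots,Y_{k,N}$ with $N=|\Zz(G_0)/F(e_k)|$.

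Finally, for the orbit decomposition I would write, using the generation statement,
$$Ad_{\Zz_{G_0}(h)}(e_k)=\bigcup_{X\in\Zz(G_0)}Ad_{\Zz_{G_0}(h)^0}\bigl(Ad_X(e_k)\bigr).$$
Centrality of $X$ lets one commute $Ad_X$ past $Ad_{\Zz_{G_0}(h)^0}$, so the summand depends only on the coset $XF(e_k)$; and two representatives yield the same open orbit only if they differ by an element of $F(e_k)$, so distinct cosets give disjoint open orbits. Hence $Ad_{\Zz_{G_0}(h)}(e_k)$ is the disjoint union of the $N$ open connected orbits $Ad_{\Zz_{G_0}(h)^0}(Y_{k,i}(e_k))$. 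I expect the main obstacle to be the honest realisation of the ``algorithm'' --- in particular the effective determination of $\pi_0(\Zz(G_0))$ from the real-form data of $G_0$ --- whereas the finiteness and the decomposition are soft once the injection into the finite orbit set of Lemma \ref{rsa} is in hand.
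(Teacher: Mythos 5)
Your proposal is correct in substance and reaches all three assertions, but it packages the finiteness argument differently from the paper and introduces one claim the paper deliberately avoids. For finiteness of $\Zz(G_0)/F(e_k)$ the paper picks generators $X_1,\dots,X_l$ of the finitely generated abelian group $\Zz(G_0)$ and argues generator by generator: since the set of open $Ad_{\Zz_{G_0}(h)^0}$-orbits is finite (Lemma \ref{rsa}), some power $X_j^{p(j)}$ returns $e_k$ to its own orbit, so each generator has finite order modulo $F(e_k)$ and the quotient, being finitely generated abelian and torsion, is finite. You instead let all of $\Zz(G_0)$ act on the finite set of open orbits, identify $F(e_k)$ as the stabiliser of the orbit of $e_k$, and inject $\Zz(G_0)/F(e_k)$ into that finite set by orbit--stabiliser. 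Both arguments rest on exactly the same input (centrality of $\Zz(G_0)$, hence commutation with $Ad_{\Zz_{G_0}(h)^0}$ and preservation of $\g_1(\frac{h}{2})$ and of openness, plus Lemma \ref{rsa}); yours is the cleaner global formulation, the paper's has the advantage of being directly effective, since the powers $p(j)$ are what the algorithm actually computes. Your treatment of the disjoint-union decomposition matches the paper's.

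The one point to repair is your parenthetical claim that $\Zz(G_0)/F(e_k)$ is a quotient of \emph{the finite component group} $\pi_0(\Zz(G_0))$, and the ensuing algorithm that ``enumerates representatives of $\pi_0(\Zz(G_0))$''. The inclusion $\Zz(G_0)^0\subseteq\Zz_{G_0}(h)^0\subseteq F(e_k)$ is fine, but finiteness of $\pi_0(\Zz(G_0))$ is not justified and is not what the paper asserts: the paper only records that $\Zz(G_0)$ is a finitely generated abelian group (so it may well contain free factors, in which case $\pi_0(\Zz(G_0))$ is infinite and your enumeration does not terminate). This does not damage your finiteness proof, which you obtained independently by orbit--stabiliser, but it does leave a gap in the ``there exists an algorithm'' assertion. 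The fix is the paper's: work with the finitely many generators $X_j$, compute the minimal $p(j)$ with $X_j^{p(j)}\in F(e_k)$ using the same-component test of \cite{Basu} (termination is guaranteed by the finiteness you already proved), and enumerate the resulting finite quotient from these data.
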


\begin{proof} We know that $\Zz (G_0)$ is a finitely generated abelian group, which can be find explicitly \cite{Vinberg1988}.  Let $X_1, \cdots ,X_l$ be generators
of $\Zz(G_0)$.  Since  the number  of  connected  open $Ad_{\Zz_{G_0} (h) ^ 0}$-orbits  in $\g_1 (\frac{h}{2})$  is finite,  for each $j \in \overline{1,l}$ there exists a finite number  $p(j)$ such that $ Ad_{ X _j ^ {p(j)}} (e_k)$  belongs to the orbit $Ad_{\Zz_{G_0}(h) ^ 0}   ( (e_k))$.
This proves the first assertion of Lemma \ref{alg1}. The second assertion  follows from the proof of the first assertion using the  algorithm in
\cite{Basu}, see Remark \ref{rsa1}. The last assertion follows from the second assertion.
\end{proof}

\begin{remark}\label{alg2} We summarize our result  in the following algorithm to find   conjugacy classes of nilpotent elements
of degree 1  in a real  $\Z_m$-graded  semisimple Lie algebra $\g$. First  we classify    characteristics of nilpotent elements   in $\g_1$ using
the algorithm in Remark \ref{Remark.5.2}.   Theorem \ref{Theorem.5.5} shows that the conjugacy classes of nilpotent elements in $\g_1$ having a  given  characteristic
$h$ is  in a 1-1 correspondence  with  the set of  open   $Ad_{\Zz_{G_0} }(h)$-orbits in $\g_1 (\frac{h}{2})$. Using Lemma \ref{rsa} and Lemma \ref{alg1}
we compute the  number of    open   $Ad_{\Zz_{G_0}} (h)$-orbits in $\g_1 (\frac{h}{2})$ as well as  choose  sample representatives  for each open orbit  with help of the  algorithm in \cite{Basu}, see also  Remark \ref{rsa1}.
\end{remark}

\begin{example}\label{lin} Let us consider one very simple example to show how our algorithm works.
Let $\g = \ssl_2 (\C)$ be a simple Lie algebra over $\C$ and $\g _0 = \ssl_2 (\R)$ its Lie subalgebra. Then
$\g_0$ is the fixed point set of the involution $\theta$on $\g$ defined by $\theta(x) = \bar x$.
We write  $\g = \g_0 + \g_1$, where $\g_1 = \sqrt{-1}\g_0 \subset \ssl_2 (\C)$. The adjoint action of $G_0 = SL(2,\R)$ on
$g_1$ is equivalent to the adjoint action of $G_0$ on $\ssl_2 (\R)$. Clearly, $g ^\C = \ssl_2(\C) + \ssl_2 (\C)$, and $\g_0 ^\C = \ssl_2 (\C)$. It is known that there is only a unique nilpotent  $Ad_{G_0 ^\C}$-orbit in $\g_1 ^\C$, whose characteristic is conjugate to $h = diag (1, -1)\in \g_0^\C$. By Lemma \ref{Lemma.5.1}  the element $h$
is also the unique  (up to conjugacy) characteristic element in $\g_0$.  Let us consider the $\Z$-graded Lie algebra
$\g(\frac{h}{2})$.  We have
$$ \g (\frac{h}{2}) = \g_{-1} (\frac{h}{2})+ \g_0 (\frac{h}{2})+\g_1 (\frac{h}{2})$$
where
$$\g_0 (\frac{h}{2})= \la h\ra _\R ,$$
$$\g_1 (\frac{h}{2})= \ra \left ( \begin{array}{cc}0 & \sqrt{-1}\\
0 & 0
\end{array}) \right \ra _\R \subset \g_1,$$
$$\g_{-1} (\frac{h}{2})= \ra \left ( \begin{array}{cc}0 &  0 \\
\sqrt{-1} & 0
\end{array}) \right \ra _\R \subset \g_1.$$
By Theorem \ref{Theorem.5.5}  there is a 1-1 correspondence between the conjugacy classes of nilpotent elements
in $\g_1$  and open $Ad_{\Zz_{G_0} (h)}$-orbits in $\g_1(\frac{h}{2})$. Since
$\Zz(G_0) = Id$. by Lemma 4.7 there is a 1-1 correspondence  between  the latter orbits and the connected
components  of the semialgebraic  set $\{ x^ 2 > 0\}$ in $\g_1 (\frac{h}{2}) = \R$. Hence there are exactly
two conjugacy classes  of nilpotent elements in $\g_1$.
\end{example}

\section{Orbits in a real $\Z_2$-graded semisimple Lie algebra}

In this section, using  results in the previous sections, we  describe the set of  homogeneous elements in
a real $\Z_2$-graded Lie algebra $\g$, see Remark \ref{sumix} for a summarization.

 The restriction  to   real $\Z_2$-graded semisimple Lie algebras is motivated by the
fact that   we do not have a classification of Cartan subspaces in $\g_1$, if $ m \ge 3$.  A classification
of Cartan  subspaces  in $\g_1$ in a $\Z_2$-graded real semisimple Lie algebra has been given by  Matsuki and Oshima \cite{OM1980}, based on  an earlier work
by Matsuki \cite{Matsuki1979}. 
 
Let us first consider  the class of  semisimple elements in $\g_1$. Any
semisimple element in $\g_1$ belongs to a Cartan subspace in $\g_1$.

\begin{lemma}[\cite{OM1980}]\label{stan} Let $\tau_\u$ be a R-compatible Cartan involution of a
real $\Z_2$-graded semisimple Lie algebra $\g$. Every Cartan subspace $\h \subset \g_1$ is  $Ad_{G_0}$-conjugate  to a Cartan subspace
$\h_{st}$ in $\g_1$
which is  invariant under the action of $\tau_\u$.
\end{lemma}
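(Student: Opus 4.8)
The plan is to produce a Cartan involution that simultaneously preserves the gradation and leaves the given Cartan subspace $\h$ invariant, and then compare it with the fixed R-compatible involution $\tau_\u$. Recall the general principle (as in Theorem \ref{Theorem.3.7} and the $m=2$ case in \cite{Berger1957}) that any maximal abelian subspace of semisimple elements in $\g_1$ can be enlarged to a $\tau$-stable Cartan-type configuration. Concretely, first I would pick a maximal abelian subalgebra $\a$ of $\pg \cap \g_1$ (where $\g = \k \oplus \pg$ is the Cartan decomposition for $\tau_\u$) containing the "split part" of $\h$, and observe that elements of $\pg \cap \g_1$ are automatically semisimple with real eigenvalues under $\ad$, so they lie in some Cartan subspace. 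The point of the lemma is that every Cartan subspace, which a priori may contain elliptic (compact) directions, is conjugate to one adapted to $\tau_\u$, meaning $\tau_\u(\h_{st}) = \h_{st}$.

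The key steps, in order, are as follows. First I would invoke the existence of the R-compatible involution $\tau_\u$ from Theorem \ref{Theorem.3.7}, which gives the Cartan decomposition $\g_1 = (\k \cap \g_1) \oplus (\pg \cap \g_1)$ compatible with the gradation. Second, given an arbitrary Cartan subspace $\h \subset \g_1$, I would decompose its elements via their Jordan-type behavior with respect to $\tau_\u$: write each $x \in \h$ in terms of its $\tau_\u$-symmetric and $\tau_\u$-antisymmetric parts, using that $\h$ consists of commuting semisimple elements. Third, the standard argument (Berger, or the analogous statement in \cite{Helgason1978} chapter IX) shows that one may conjugate $\h$ by an element of $G_0$ so that its maximal $\R$-split part lands in a maximal abelian subspace of $\pg \cap \g_1$; the complementary "toral" part is then automatically handled because the centralizer construction keeps everything inside the $\tau_\u$-stable pieces. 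Finally I would verify that the resulting conjugate $\h_{st}$ satisfies $\tau_\u(\h_{st}) = \h_{st}$, using that $\tau_\u$ reverses the gradation ($\tau_\u(\g_1) = \g_{-1}$) and fixes the relevant centralizers, as noted in Example \ref{Example.3.3}.ii.

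The main obstacle I anticipate is the conjugation step itself: showing that the compact (elliptic) directions of $\h$ can be simultaneously brought into $\tau_\u$-stable position without disturbing the already-aligned split directions. This requires a careful use of the $Ad_{G_0}$-action restricted to the centralizer $\Zz_\g(\a)$ of the maximal split abelian subspace $\a$, invoking that this centralizer inherits a $\Z_2$-gradation and a compatible Cartan involution (again Example \ref{Example.3.3}.ii). The delicate point is that one must iterate or work inside this smaller graded reductive algebra, where the analogous maximal-compact-torus conjugacy holds, and then check consistency. Since the bulk of this is precisely the content of \cite{OM1980} (building on \cite{Matsuki1979}), I would at this stage cite their classification and conjugacy results rather than reprove them, presenting the statement as a direct consequence of the Matsuki–Oshima theory adapted to our R-compatible framework.
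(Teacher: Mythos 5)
The paper offers no proof of this lemma at all: it is stated purely as a citation of \cite{OM1980}, with the surrounding text only recording consequences (finiteness of standard Cartan subspaces and the existence of an algorithm to find them). Your proposal, after sketching the expected Matsuki--Oshima conjugacy argument, ultimately defers to that same citation, so it takes essentially the same route as the paper; the preliminary sketch is a plausible outline of how the cited result is proved, but nothing in it is required for, or in conflict with, what the paper actually does.
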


A Cartan subspace $\h_{st}$ in $\g_1$  which is invariant under   the  action of $\tau_\u$ is called  {\it a standard Cartan subspace}.  It is known that there are only finite number of standard Cartan subspaces,  moreover there is algorithm to   find them \cite{OM1980}. Let $\g  = \k \oplus \pg $ be the Cartan decomposition of $\g$ w.r.t. $\tau _\u$. Then
$\h_{st} = (\h_{st} \cap \k) \oplus (\h_{st} \cap \pg)$. Denote by $K_0$ the  connected Lie subgroup in $G_0$  with  Lie algebra $\k$.

\begin{proposition}\label{c1}  Suppose that $h , h ' \in \h_{st}$ are $Ad_{G_0}$-conjugate.
Then they are $Ad_{K_0}$-conjugate.
\end{proposition}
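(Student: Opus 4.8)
The plan is to reduce the statement to a known conjugacy result in the theory of symmetric pairs by exploiting the Cartan decomposition of $\g$ with respect to the $R$-compatible involution $\tau_\u$. Since $\h_{st}$ is invariant under $\tau_\u$, we have the splitting $\h_{st} = (\h_{st}\cap\k)\oplus(\h_{st}\cap\pg)$, and every element of $\h_{st}$ is semisimple. The elements of $\h_{st}\cap\pg$ behave like the ``vector part'' (real-diagonalizable directions) while $\h_{st}\cap\k$ is the ``compact part'' (elliptic directions). My first step is therefore to record how $\tau_\u$ interacts with the $Ad_{G_0}$-action: because $\tau_\u$ is a Cartan involution, $G_0 = K_0\cdot\exp(\pg\cap\g_0)$ is a global Cartan (polar) decomposition, and $K_0$ is a maximal compact subgroup of $G_0$.

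First I would treat the generic/regular case and then patch. The cleanest route is to observe that if $h$ and $h'$ both lie in $\h_{st}$ and are $Ad_{G_0}$-conjugate, say $Ad_g(h)=h'$ with $g\in G_0$, then I want to replace $g$ by an element of $K_0$. The key idea, mirroring the classical KAK / Weyl-group argument for symmetric spaces, is to use a suitable $\tau_\u$-compatible inner product: since $\tau_\u$ is a Cartan involution, $B_{\tau_\u}(X,Y) := -B(X,\tau_\u Y)$ is a positive-definite $Ad_{K_0}$-invariant form on $\g$ (this is exactly the compact-form Hermitian form restricted to $\g$, as used in the proof of Theorem~\ref{Theorem.3.7}). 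The plan is to show that among all $G_0$-conjugates of $h$ landing in $\h_{st}$, one can arrange the conjugating element to preserve the Cartan decomposition, i.e.\ to commute with $\tau_\u$, which forces it into $K_0$ up to the centralizer.

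Concretely, I would proceed as follows. Both $h,h'\in\h_{st}$, and $\h_{st}$ is $\tau_\u$-stable, so $\tau_\u(h),\tau_\u(h')\in\h_{st}$ as well. Write $g=k\exp(Z)$ with $k\in K_0$ and $Z\in\pg\cap\g_0$ (Cartan decomposition of $g\in G_0$). From $Ad_g(h)=h'$ and the fact that $\tau_\u$ reverses the roles in $B_{\tau_\u}$, I would apply $\tau_\u$ to the relation $Ad_g(h)=h'$ and compare with the original; using $\tau_\u(h)$ and $\tau_\u(h')$ and the self-adjointness of $ad_Z$ with respect to $B_{\tau_\u}$ (because $Z\in\pg$), I would deduce that $\exp(Z)$ actually fixes $h$ (so $Z$ centralizes $h$) and hence $Ad_k(h)=h'$ with $k\in K_0$. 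This is the standard ``the hyperbolic part of the conjugator can be absorbed into the centralizer'' argument: the positivity of $B_{\tau_\u}$ turns the equation into a statement that a positive self-adjoint operator fixes $h$, forcing $[Z,h]=0$. I expect this self-adjointness/positivity step to be the main obstacle, since one must check carefully that $h$ and $\tau_\u(h)$ (equivalently the $\k$- and $\pg$-components of $h$) are handled correctly when $h$ has a nonzero compact part $h\cap\k$; the argument genuinely uses that $h,h'$ lie in the \emph{same} $\tau_\u$-stable subspace $\h_{st}$, not merely that they are semisimple. Once $[Z,h]=0$ is established, $\exp(Z)\in\Zz_{G_0}(h)$ and the identity $Ad_k(h)=Ad_g(h)\cdot\text{(adjustment)}=h'$ gives the desired $K_0$-conjugacy, completing the proof.
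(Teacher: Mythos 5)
Your proposal is correct and is essentially the paper's own argument: write the conjugating element as a product of an element of $K_0$ and an element of $\exp(\pg\cap\g_0)$, apply $\tau_\u$ to the conjugacy relation, and use the positivity (self-adjointness with respect to $B_{\tau_\u}$) of the $\exp(\pg)$-factor to conclude that it can be absorbed into the centralizer, leaving a $K_0$-conjugacy. The one obstacle you flag --- handling $h$ with nonzero compact part --- is exactly what the paper dispatches first: since $Ad_g$ preserves the unique decomposition of a semisimple element into commuting elliptic and hyperbolic parts, it sends $h_\k\mapsto h'_\k$ and $h_\pg\mapsto h'_\pg$ separately, and each of these components is a $\tau_\u$-eigenvector, so your positivity argument applies to each one and yields $[Z,h]=0$ as desired.
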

\begin{proof}  We employ   ideas  in \cite{Rothschild1972} for our proof. Let $ h = h_\k + h _\pg$  and $h' =
h_\k' + h _\pg'$  be the decomposition
of $h$  and $h '$ into  elliptic and vector parts. Suppose
that $h = Ad_X (h')$, where $X \in G_0$.   Since $Ad_X$ does not change the eigenvalues, $h_{\pg} = Ad_X(h _{\pg}')$.
Suppose that $h_{\pg} \not = 0$. We note that $G_0 = \exp (\g_0 \cap \pg) \cdot K_0  $, and
  $\exp (\g _0 \cap \pg) \subset \exp \sqrt{-1} \u_0$.  Now suppose that $ X = A \cdot Y$
where $Y \in K_0$ and $ A \in \exp i\u_0$.  Let $y = Ad_Y h_{\pg}\in \sqrt{-1}\u_1$.  Then
$(Ad_A) \sqrt{-1}y =\sqrt{-1} h_{\pg}' = \tau_\u (Ad _A \sqrt{-1} y) = Ad_A ^{-1}\sqrt{-1} y$, so $ Ad_A ^2 y = y$. If $A \not = Id$ this implies that
$Ad_A$ has at least  one  eigenvalue $(-1)$, which contradicts the fact that
$Ad_A$  is a positive definite  transformation.
 
Hence $A = Id$ and $X = Y\in K_0 \subset G_0$.    This proves the first assertion, if $h_{\pg} \not = 0$. If $h_{\pg} = 0$ then
$h_\k \not = 0$ and we can   apply the same argument to  conclude that $X \in K_0$.
\end{proof}

Since any  semisimple element in $\g_1$ is $Ad_{G_0}$-conjugate to an element in some standard Cartan subspace
in $\g_1$, using the Cartan theory  of symmetric spaces, see e.g.\cite{Helgason1978}, we get

\begin{corollary}\label{easy} The set of $Ad_{G_0}$-conjugacy classes of semisimple elements in $\g_1$ with pure imaginary  or zero  eigenvalues (elliptic   semisimple elements)  coincides with  the  quotient set of  a   Cartan subspace (maximal abelian subspace)  $\h_{1\k} \subset (\g_1 \cap \k)$ under
the action of  the Weyl group  of the $\Z_2$-graded  symmetric Lie algebra $ \k_0\oplus \k \cap \g _1$.  The  set of $Ad_{G_0}$-conjugacy classes of
real semisimple elements  in $\g_1$  coincides with the quotient set of a  Cartan subspace (maximal abelian subspace)  $\h_{1\pg} \subset (\g_1 \cap \pg)$
under the action of the  Weyl group of the $\Z_2$-graded symmetric Lie algebra $\k_0 \oplus \g_1 \cap \pg$.
\end{corollary}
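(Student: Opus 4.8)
The plan is to reduce the statement to the classical theory of compact and split symmetric pairs, using Lemma \ref{stan} to place a semisimple element into a standard Cartan subspace and Proposition \ref{c1} to pass from $Ad_{K_0}$-conjugacy up to $Ad_{G_0}$-conjugacy. Since $m=2$, the R-compatible involution $\tau_\u$ satisfies $\tau_\u(\g_1) = \g_{-1} = \g_1$, so it preserves each graded component and refines the Cartan decomposition to $\g_i = \k_i \oplus \pg_i$, where $\k_i = \g_i \cap \k$ and $\pg_i = \g_i \cap \pg$. The bracket relations $[\k_0,\k_1]\subset\k_1$, $[\k_1,\k_1]\subset\k_0$, $[\k_0,\pg_1]\subset\pg_1$, $[\pg_1,\pg_1]\subset\k_0$ then show that $\k = \k_0 \oplus \k_1$ and $\k_0 \oplus \pg_1$ are both $\Z_2$-graded symmetric Lie algebras with isotropy part $\k_0$. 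Elements of $\k_1 \subset \k$ have purely imaginary $\ad$-eigenvalues (elliptic), while elements of $\pg_1 \subset \pg$ are real semisimple; in either case they are automatically semisimple, which is why ``maximal abelian subspace'' and ``Cartan subspace'' coincide here.

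For the elliptic case, let $x \in \g_1$ be an elliptic semisimple element. It lies in some Cartan subspace of $\g_1$, which by Lemma \ref{stan} is $Ad_{G_0}$-conjugate to a standard one $\h_{st} = (\h_{st}\cap\k)\oplus(\h_{st}\cap\pg)$. Writing the conjugate of $x$ as $x_\k + x_\pg$ along this splitting, the $\pg$-summand would contribute real eigenvalues, so ellipticity forces $x_\pg = 0$; thus $x$ is conjugate into $\k_1$. I would then fix a maximal abelian subspace $\h_{1\k}\subset\k_1$ and invoke the classical theory of the compact symmetric pair $(\k,\k_0)$, see \cite{Helgason1978}: every element of $\k_1$ is $Ad_{K_0}$-conjugate into $\h_{1\k}$, and two elements of $\h_{1\k}$ are $Ad_{K_0}$-conjugate if and only if they lie in the same orbit of the Weyl group $W = \Nn_{K_0}(\h_{1\k})/\Zz_{K_0}(\h_{1\k})$. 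To upgrade from $K_0$- to $G_0$-conjugacy I would extend $\h_{1\k}$ to a standard Cartan subspace of $\g_1$ — possible since $\h_{1\k}\subset\k$ is pointwise fixed by $\tau_\u$ — and apply Proposition \ref{c1}, which asserts that elements of a standard Cartan subspace that are $Ad_{G_0}$-conjugate are already $Ad_{K_0}$-conjugate. This yields the asserted bijection with $\h_{1\k}/W$.

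The real case is entirely parallel: a real semisimple element lands, via Lemma \ref{stan}, in the $\pg$-part of a standard Cartan subspace, hence is conjugate into $\pg_1$; the classical theory of the split symmetric pair $(\k_0\oplus\pg_1, \k_0)$ makes every element $Ad_{K_0}$-conjugate into a fixed maximal abelian subspace $\h_{1\pg}\subset\pg_1$, with $K_0$-orbits cut out by the corresponding Weyl group, and Proposition \ref{c1} once more upgrades this to $Ad_{G_0}$-conjugacy.

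I expect the only genuine subtlety — the main obstacle — to be this passage from $Ad_{K_0}$- to $Ad_{G_0}$-conjugacy: a priori the larger group $G_0$ could fuse distinct Weyl orbits in $\h_{1\k}$ (resp. $\h_{1\pg}$), and it is precisely Proposition \ref{c1}, together with the fact that $\h_{1\k}$ (resp. $\h_{1\pg}$) embeds in a single standard Cartan subspace so that the proposition is applicable, that rules this out. Everything else is a direct citation of the Cartan theory of symmetric spaces.
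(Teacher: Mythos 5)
Your argument is correct and follows essentially the same route as the paper, which derives the corollary in one line from Lemma \ref{stan}, Proposition \ref{c1}, and the Cartan theory of symmetric spaces; you have simply made explicit the intermediate steps (the vanishing of the $\pg$-part of an elliptic element, the reduction to the pairs $(\k,\k_0)$ and $(\k_0\oplus\pg_1,\k_0)$, and the extension of $\h_{1\k}$ to a standard Cartan subspace so that Proposition \ref{c1} applies). Your identification of the $K_0$-to-$G_0$ passage as the only genuine subtlety matches the role the paper assigns to Proposition \ref{c1}.
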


By Corollary \ref{easy}  $h_\k$ is   conjugate to some element in  a  Cartan subspace $\h_{1\k}\subset \g_1\cap \pg$.   Thus to classify all
semisimple elements in $\g_1$ it suffices to  classify all semisimple elements in $\g_1$ whose  elliptic part  is an element in $\h_{1\k}$.

\begin{corollary}\label{mix}  The  set of $Ad_{G_0}$-equivalent  elements $h$ with
given   elliptic part $h_\k \in \h_{1\k}$  coincides with the  quotient set of  a Cartan subspace in $\Zz_{\g_1 \cap \pg} (h_\k)$
under the action of the Weyl group of the $\Z_2$-graded symmetric Lie algebra
$\Zz_{ \k_0} (h_\k)\oplus (\Zz_{\g_1 \cap \pg} (h_\k))$.
\end{corollary}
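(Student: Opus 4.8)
The plan is to reduce the problem to the centralizer of the elliptic part and then invoke Corollary \ref{easy} there. First I would fix, by Lemma \ref{stan}, a standard Cartan subspace $\h_{st}\subset\g_1$ containing $h$ and write the elliptic--vector decomposition $h=h_\k+h_\pg$ with $h_\k\in\h_{st}\cap\k\subset\g_1\cap\k$ and $h_\pg\in\h_{st}\cap\pg\subset\g_1\cap\pg$; these two summands commute, $h_\k$ is elliptic and $h_\pg$ is of vector type. Since the elliptic part is fixed to be $h_\k\in\h_{1\k}$, classifying the semisimple $h\in\g_1$ with this elliptic part is the same as classifying their vector parts $h_\pg$. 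These range over all of $\Zz_{\g_1\cap\pg}(h_\k)$, because $h_\k$ and $h_\pg$ commute and lie in $\g_1\cap\k$, $\g_1\cap\pg$ respectively, and conversely $h_\k+v$ is semisimple with elliptic part $h_\k$ for every $v\in\Zz_{\g_1\cap\pg}(h_\k)$.

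Next I would introduce $\l:=\Zz_\g(h_\k)$. Because $h_\k$ has degree $1$ and $m=2$, for $x=x_0+x_1\in\l$ the components $[h_\k,x_0]\in\g_1$ and $[h_\k,x_1]\in\g_0$ must vanish separately, so $\l$ is $\Z_2$-graded with $\l_0=\Zz_{\g_0}(h_\k)$ and $\l_1=\Zz_{\g_1}(h_\k)$. As $h_\k\in\k$ we have $\tau_\u(h_\k)=h_\k$, hence $\tau_\u$ preserves $\l$ and restricts to a Cartan involution on it, with Cartan decomposition $\l=\Zz_\k(h_\k)\oplus\Zz_\pg(h_\k)$. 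In particular $\l_1\cap\pg=\Zz_{\g_1\cap\pg}(h_\k)$, and the degree-zero part of $\l\cap\k$ is $\Zz_{\k_0}(h_\k)$. Thus $\l$ is a reductive real $\Z_2$-graded Lie algebra carrying an $R$-compatible Cartan involution, so the machinery of the previous results applies to it.

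The main step is to match $Ad_{G_0}$-conjugacy with conjugacy inside $\l$. If $h'=h_\k+h_\pg'$ is $Ad_{G_0}$-conjugate to $h$, both lie in standard Cartan subspaces, so by Proposition \ref{c1} the conjugating element can be taken in $K_0$; as an element of $K_0$ it preserves $\k$ and $\pg$, hence must send $h_\k\mapsto h_\k$ and $h_\pg\mapsto h_\pg'$, so it lies in $\Zz_{K_0}(h_\k)$. The converse is immediate. Therefore the $Ad_{G_0}$-classes of semisimple $h$ with elliptic part $h_\k$ are in bijection with the $\Zz_{K_0}(h_\k)$-orbits of vector-type elements of $\Zz_{\g_1\cap\pg}(h_\k)$. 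Applying the real-semisimple half of Corollary \ref{easy} to the reductive $\Z_2$-graded algebra $\l$ identifies these orbits with the quotient of a Cartan subspace of $\l_1\cap\pg=\Zz_{\g_1\cap\pg}(h_\k)$ under the Weyl group of the $\Z_2$-graded symmetric Lie algebra $\Zz_{\k_0}(h_\k)\oplus\Zz_{\g_1\cap\pg}(h_\k)$, which is exactly the assertion.

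The point requiring care—where I expect the real work to lie—is the passage of Corollary \ref{easy} from the semisimple, connected setting to the reductive centralizer $\l$ and the possibly disconnected compact group $\Zz_{K_0}(h_\k)$. The part of the center of $\l$ lying in $\l_1\cap\pg$ is pointwise fixed by the Weyl group and merely enlarges the Cartan subspace, so reductivity is harmless; and the restricted Weyl group $N_{\Zz_{K_0}(h_\k)}(\h)/Z_{\Zz_{K_0}(h_\k)}(\h)$ is formed with the full compact centralizer, so identifications produced by components of $\Zz_{K_0}(h_\k)$ are already recorded in it. Checking precisely that the symmetric-space theory behind Corollary \ref{easy} is insensitive to the center and uses the full normalizer quotient is the one nontrivial verification in the argument.
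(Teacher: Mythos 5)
The paper states this corollary without an explicit proof, and your reduction to the $\Z_2$-graded reductive centralizer $\Zz_\g(h_\k)$ followed by Proposition \ref{c1} and the real-semisimple half of Corollary \ref{easy} is exactly the argument the surrounding text intends (cf.\ Example \ref{Example.3.3}.ii for the $\tau_\u$-invariance of the centralizer). The one point worth making explicit is that Proposition \ref{c1} is stated for two elements of a single standard Cartan subspace, whereas you apply it to $h_\k+h_\pg$ and $h_\k+h_{\pg}'$, which need not lie in a common one; since its proof uses only that the elliptic and vector parts lie in $\k$ and $\pg$ respectively, the application is legitimate, and your flagged caveats about reductivity and disconnectedness are at the same level of (informal) treatment as in the paper itself.
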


The following theorem describes  the  set of orbits   of general mixed elements in $\g_1$. Recall that for
an element $e \in  \g_1$  we denote by $e_s + e_n$ its Jordan decomposition.

\begin{theorem}\label{orbit} Two elements $e_s +  e_n, e_s ' + e_n ' \in \g_1$ are in the same $Ad_{G_0}$-orbit,
if and only if $e_s$ belongs to the  orbit $Ad_{G_0}(e_s')$ and $e_n$ belongs to the orbit
 $Ad_{\Zz_{G _ 0}(e_s)}(e_n')$. 
\end{theorem}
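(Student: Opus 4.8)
The plan is to exploit two facts established earlier: the existence and uniqueness of the Jordan decomposition inside $\g_1$ (Section 2), and its equivariance under $Ad_{G_0}$. Every $X\in G_0$ acts on $\g$ by a Lie algebra automorphism that preserves the $\Z_2$-grading, hence carries $\g_1$ to $\g_1$, sends semisimple elements to semisimple elements and nilpotent elements to nilpotent elements, and preserves the bracket; consequently, for $e=e_s+e_n$ the element $Ad_X(e)=Ad_X(e_s)+Ad_X(e_n)$ is again a Jordan decomposition, and uniqueness forces $(Ad_X(e))_s=Ad_X(e_s)$ and $(Ad_X(e))_n=Ad_X(e_n)$. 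This equivariance is the engine behind both implications.

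For necessity, suppose $e=Ad_X(e')$ with $X\in G_0$. Applying the equivariance just described to $e'$ gives $e_s=Ad_X(e_s')$ and $e_n=Ad_X(e_n')$; in particular $e_s\in Ad_{G_0}(e_s')$, which is the first condition. To reach the second condition I would first use the first condition to reduce to the case of equal semisimple parts: choose $X_0\in G_0$ with $Ad_{X_0}(e_s')=e_s$ and replace $e'$ by its conjugate $Ad_{X_0}(e')=e_s+Ad_{X_0}(e_n')$, whose nilpotent part $Ad_{X_0}(e_n')$ commutes with $e_s$ and therefore lies in $\Zz_\g(e_s)$. After this harmless replacement we may assume $e_s'=e_s$, so that $e_n,e_n'\in\Zz_\g(e_s)$ and the remaining task concerns only the centralizer.

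The sufficiency, together with the heart of the argument, runs as follows. Assume $e_s'=e_s$ and $e=Ad_g(e')$ for some $g\in G_0$. Then $e_s+e_n=Ad_g(e_s)+Ad_g(e_n')$ exhibits $e$ as a sum of a semisimple element and a commuting nilpotent element, so by uniqueness of the Jordan decomposition we obtain simultaneously $Ad_g(e_s)=e_s$ and $Ad_g(e_n')=e_n$. The first identity is exactly the assertion that $g\in\Zz_{G_0}(e_s)$, whence the second gives $e_n\in Ad_{\Zz_{G_0}(e_s)}(e_n')$, as required. Conversely, if $e_n=Ad_g(e_n')$ with $g\in\Zz_{G_0}(e_s)$, then $Ad_g(e')=Ad_g(e_s)+Ad_g(e_n')=e_s+e_n=e$, so $e$ and $e'$ are $Ad_{G_0}$-conjugate. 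I expect the main obstacle to be precisely this point: the conjugating element $g$ is handed to us only in $G_0$, and to force it into the subgroup $\Zz_{G_0}(e_s)$ one must invoke the uniqueness of the Jordan decomposition a second time, now applied to the grading-preserving automorphism $Ad_g$, rather than merely tracking eigenvalues. Everything else is bookkeeping with the $Ad_{G_0}$-equivariance of the decomposition.
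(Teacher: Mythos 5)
Your proof is correct and takes the same route as the paper, which simply observes that the theorem is immediate from the uniqueness of the Jordan decomposition (Theorem \ref{Theorem.2.1}); you have merely written out the $Ad_{G_0}$-equivariance argument that this one-line justification leaves implicit. Your reduction to the case $e_s = e_s'$ is also the right way to read the statement of the orbit condition on the nilpotent parts.
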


Theorem \ref{orbit} is  straightforward, since the Jordan decomposition is  unique, see Theorem \ref{Theorem.2.1}.  We note that $Ad_{\Zz_{G_0} ( e_s)}$ may disconnected, but it  is a subgroup in the connected group $Ad_{\Zz_{G} ( e_s)}$ (by the Kostant theorem in \cite{Kostant1963}), so it seems possible to determine
this  subgroup. 

\begin{remark}\label{sumix} We summarize  our results in the following description of the set  of the   adjoint orbits in $\g_1$. Any element   in $\g_1$ is $Ad_{G_0}$-conjugate
to   an element of the form $h_\k + h_{\pg} + e_n$ such that \\
i) $h_\k$ is an elliptic semisimple element in $\h_{1\k}$,\\
ii)  $h_{\pg}$ is a real semisimple element,  commuting with $\h_\k$,\\
iii) $e_n$ is a nilpotent element,   commuting with $h_\k + h_{\pg}$.\\
Furthermore,  two elements $h_\k + h_{\pg} + e_n$ and $h_\k' + h_{\pg}' + e_n'$  are  conjugate,  only if
$h_\k$ is conjugate to $h_\k'$ under the action of the   associated Weyl group, see Corollary \ref{easy}.  Thus we can assume that $h_\k = h_\k '$.
Two elements  $h_\k + h_{\pg} +e_n$  and $h_\k + h_{\pg}' +e_n'$ are conjugate, only if $h_{\pg}$ and $h_{\pg}'$ are  conjugate under the action of the
associated Weyl group, see Corollary \ref{mix}.  Thus we can assume that  $h_{\pg} = h_{\pg}'$. Finally,  two elements $h_\k + h_{\pg} +e_n$  and $h_\k + h_{\pg} +e_n'$ are conjugate, if and only $e_n$ and $e_n'$ are  in the same orbit of nilpotent elements of the associated  $\Z_m$-graded reductive Lie algebra, see Theorem \ref{orbit}.  The classification of  these nilpotent orbits  can be obtained using the method in section 4.
\end{remark}

We finish this section by showing  the relation  between  the set of  orbits on real (resp. complex) $\Z_m$-graded Lie algebras and the $GL (8, \R)$-orbit spaces (resp.  the $GL (8, \C)$-orbit space)
of k-vectors  and k-forms on $\R ^ 8$ (resp. on $\C ^ 8$). To find a classification of  k-forms on $\R ^8$ is an important problem in classical invariant theory. 
Many interesting applications in geometry, \cite{DHM1988}, \cite{Hitchin2000}, \cite{LPV2008}, are related to this classification problem. This problem motivates  the author to write this note.

 Kac observed  that   the orbit space of homogeneous elements of degree 1
in  the $\Z_3$-graded complex algebra $\e_8$ (see example \ref{Example.3.3}.v) can be identified with the  $SL(9, \C)$-orbit space of 3-vectors on $\C ^ 9$, and  the orbit space of homogeneous elements of degree 1 in
the $\Z_2$-graded complex algebra $\e_7$ (see example \ref{Example.3.3}.iv) can be identified with  the orbit
space  of 4-vectors in  $\C ^ 8$ \cite{Kac1969}. In \cite{EV1978} Elashvili and Vinberg   classified all homogeneous elements
of degree 1 in  the $\Z_3$-graded Lie algebra $\e_8$. They also observed that, all 3-vectors in  
$\C ^ k$, $ k\le 8$, can be considered as  nilpotent elements  of degree 1  in this  $\Z_3$-graded Lie algebra
$\e_8$, furthermore a classification of $GL(k, \C)$-orbits on  $\Lambda ^ 3 (\C ^ k)$ is equivalent to a
classification of  these  homogeneous nilpotent elements.  In \cite{Djokovic1983}, based on this remark,
Djokovic classified all 3-vectors in $\C ^ 8$ and $\R ^ 8$. His classification is  reduced to a classification of homogeneous
nilpotent elements of degree 1 in a $\Z$-graded Lie algebra $\e_8$  (resp.$\e_{8 (8)}$).
His method is  close to our one (more precisely, our method is a generalization of his method), but
he used a  method of the Galois cohomology theory,  first used by Revoy in \cite{Revoy1979}, to compute the number of the open orbits
in $\Z$-graded $e_{8(8)}$.  Djokovic used the  Vinberg method of support to find
 a representative for each open orbit in $\Z$-graded $\e_{8(8)}$.

A classification of  4-vectors in $\C ^ 8$ has been given by Antonyan in \cite{Antonyan1981}. Using his classification
 and our  method  in this note it is   possible to classify all 4-vectors in $\R ^ 8$, which is reduced
 to the classification of  homogeneous elements of degree 1 in the $\Z_2$-graded Lie algebra $\e_{7(7)}$, (see example \ref{Example.3.3}.iv).

A classification of  $SL (9,\C)$-orbits of 3-forms on $\C ^ 9$ (resp.  $SL(9, \R)$-orbits
on $\Lambda ^ 3 (\R ^ 9) ^ *$) is equivalent to a classification
of homogeneous elements of  degree (-1) in the $\Z_3$-graded Lie algebra  $\e_8$ (resp. $\e_{8(8)}$) \cite{EV1978}.
By Corollary \ref{rev} this classification can be obtained from  a classification 3-vectors on $\C ^ 9$ (resp.
on $\R ^ 9$). In particular,  a classification of 3-forms on $\R ^ 8$  can be   obtained from the classification
of 3-vectors in $\R ^ 8$ in \cite{Djokovic1983}.

We note that a classification of $GL ( 8, \R)$-orbits on the space $\Lambda ^ k (\R ^ 8)$ can be obtained
easily from a classification of $SL (8,\R)$-orbits on the same space.

Given a volume element $vol ^ * \in \Lambda ^ 8 (\R ^ 8) ^ *$,  there is a unique element
 $vol_* \in\Lambda ^ 8 (\R  ^ 8)$   such that $\la vol ^ *, vol _* \ra = 1$.
Further there is a natural  Poincare isomorphism $P_* : \Lambda ^ k (\R  ^ 8)^ * \to \Lambda ^{8 -k} (\R ^ 8),
\:  \la P_* ( x), y\ra  = \la x\wedge y , vol_*\ra $, which commutes with the $SL (8, \R)$-action. 

Thus we can get a classification of all k-vectors and k-forms  on $\R ^ 8$ (resp. on $\C ^ 8$) using the theory of  real
(resp. complex) $\Z_m$-graded  semisimple Lie algebras.

{\bf Aknowledgement}.  The author is supported in part by grant IAA100190701 of Academy of Sciences of Czech Republic. She thanks Sasha Elashvili
for  stimulating helpful discussions   which   get her interested in this problem as well as for his help
in literature. She is grateful to  Saugata Basu, Gehard Pfister, Jiri Vanzura, Ernest Vinberg for helpful discussions,
and the anonymous referee for helpful remarks.   A part of this note has been written
during the author stay at the ASSMS of the Government College in Lahore-Pakistan,  
MSRI-Berkeley, and GIT-Atlanta. She  thanks these institutions for their hospitality and financial support.

\medskip

\bigskip

{\small  Address: Mathematical Institute of ASCR,
Zitna 25, CZ-11567 Praha 1, 
email: hvle@math.cas.cz}

\end{document}